\numberwithin{equation}{section}
  \newtheorem{theorem}{Theorem}[section]
  \newtheorem{proposition}[theorem]{Proposition}
  \newtheorem{lemma}[theorem]{Lemma}
  \newtheorem{corollary}[theorem]{Corollary}
\title[Warped products with a Tripathi connection]{Warped products with a Tripathi connection}
\author[A. S. Diallo, F. Massamba and S. J. Mbatakou]{Abdoul Salam Diallo*, Fortun\'{e} Massamba** and Salomon Joseph Mbatakou***}
\newcommand{\acr}{\newline\indent}
\address{\llap{*\,} School of Mathematics, Statistics and Computer Science\acr
 University of KwaZulu-Natal\acr
 Private Bag X01, Scottsville 3209\acr
South Africa  \acr
and \acr
Universit\'e Alioune Diop de Bambey\acr
UFR SATIC, D\'epartement de Math\'ematiques\acr
B. P. 30, Bambey, S\'en\'egal}
\email{Diallo@ukzn.ac.za, abdoulsalam.diallo@uadb.edu.sn}
\address{\llap{**\,} School of Mathematics, Statistics and Computer Science\acr
 University of KwaZulu-Natal\acr
 Private Bag X01, Scottsville 3209\acr
South Africa}
\email{massfort@yahoo.fr, Massamba@ukzn.ac.za}
\address{\llap{***\,} School of Mathematics, Statistics and Computer Science\acr
 University of KwaZulu-Natal\acr
 Private Bag X01, Scottsville 3209\acr
South Africa}
\email{Mbatakous@ukzn.ac.za,mbatakou@gmail.com}
\thanks{} 
\subjclass[2010]{53B05; 53B15; 53B20}
\keywords{Warped product, Levi-Civita connection, Tripathi connection, semi-symmetric connection, quarter-symmetric connection.}
\begin{document}
 
\begin{abstract}  
The warped product $M_1 \times_F M_2$ of two Riemannian manifolds $(M_1,g_1)$ and $(M_2,g_2)$ is the product manifold $M_1 \times M_2$ equipped with the warped product metric $g=g_1 + F^2 g_2$, where $F$ is a positive function on $M_1$. The notion of warped product manifolds is one of the most fruitful generalizations of Riemannian products. Such a notion plays very important roles in differential geometry as well as in physics, especially in general relativity. In this paper we study warped product manifolds endowed with a Tripathi connection. We establish some relationships between the Tripathi connection of the warped product $M$ to those $M_1$ and $M_2$.
\end{abstract}

\maketitle

\section{Introduction}
In 1969, R. L. Bishop and B. O'Neil \cite{bishop} introduced the concept of warped products, which were used to construct a large class of complete Riemannian manifolds with negative sectional curvature. Warped product have significant applications, in general relativity, in the studies related to solution of Einstein's equations. Besides general relativity, warped product structures have also generated interest in many areas of geometry, especially due to their role in construction of new examples with interesting curvature and symmetry properties.\\
Let $(M_1,g_1)$ and $(M_2,g_2)$ be two Riemannian manifolds and $F$ be a positive differentiable functions on $M_1$. Consider the product manifold $M_1\times M_2$ with its projections $\pi:M_1\times M_2\rightarrow M_1$ and 
$\sigma:M_1\times M_2\rightarrow M_2$. The \textit{warped product} is the product manifold $M_1\times M_2$ with the Riemannian structure $g=g_1 \oplus F^2 g_2$ given by
\begin{eqnarray}
 g= \pi^* g_1 + F^2 \sigma^* g_2,
\end{eqnarray}
where $F$ is a positive function on $M_1$. We denote the warped product of Riemannian manifolds $(M_1,g_1)$ and $(M_2,g_2)$ by $M_1 \times_F M_2$ and we refer to $(M_1,g_1)$ and $(M_2,g_2)$ as the base and the fiber of the product, respectively. The function $F$ is called the \textit{warping function}. If the warping function $F$ is constant then the warped product $M_1 \times_F M_2$ is a direct product, which we call as trivial warped product. The study of warped product manifolds has become a very active research subject (See \cite{diallo} and reference therein).\\
Let $\nabla$ be a linear connection in an $n$-dimensional differentiable manifold $M$. The torsion tensor $T$ and the curvature tensor $R$ of $\nabla$ are given, respectively by
\begin{eqnarray*}
 T(X,Y) &=& \nabla_X Y -\nabla_Y X -[X,Y],\\
 R(X,Y)Z &=& [\nabla_X,\nabla_Y]Z -\nabla_{[X,Y]}Z,
\end{eqnarray*}
for $X,Y,Z \in \Gamma(TM)$. The connection $\nabla$ is symmetric if its torsion tensor $T$ vanishes,
otherwise it is non-symmetric. The connection $\nabla$ is a metric connection if there is a Riemannian metric
$g$ in $M$ such that $\nabla g=0$, otherwise it is non-metric. It is known that $\nabla$ is the 
Levi-Civita connection of a Riemannian metric if the holonomy group of $\nabla$ is a subgroup of
the orthogonal group $\mathcal{O}(n)$ (\cite{schmidt}).

In 2008, Tripathi introduced a new connection in a Riemannian manifold, which is a natural extension of several 
symmetric, semi-symmetric and quarter-symmetric connections. He proved the following result.
\begin{theorem}\cite{tripathi1}
Let $(M,g)$ be an $n$-dimensional Riemannian manifold equipped with the Levi-Civita connection $\mathring{\nabla}$. 
Let $f_1,f_2$ be functions in $M$, $u,u_1,u_2$ are $1$-forms in $M$ and $\phi$ is a $(1,1)$ tensor field in $M$. Let
\begin{eqnarray}
 u(X) = g(P,X); \; u_1(X)=g(P_1,X);\;  u_2(X)=g(P_2,X) 
 \end{eqnarray}
 \begin{eqnarray}
 g(\phi(X),Y) = \Phi(X,Y)=\Phi_1(X,Y)+\Phi_2(X,Y)
\end{eqnarray}
where $P,P_1,P_2$ are vector fields on $M$ and $\Phi_1, \Phi_2$ are symmetric and skew-symmetric parts of the $(0,2)$ 
tensor $\Phi$ such that
\begin{eqnarray}\label{Equat0}
 \Phi_1(X,Y)=g(\phi_1 X,Y), \quad \Phi_2(X,Y)=g(\phi_2 X,Y).
\end{eqnarray}
Then there exists a unique connection $\nabla$ in $M$ given by
\begin{eqnarray}\label{Equat1}
 \nabla_X Y &= & \mathring{\nabla}_{X} Y +u(Y)\phi_1 X -u(X)\phi_2 Y -g(\phi_1 X,Y)P \nonumber \\
 &\quad & -f_1 \{u_1(X)Y +u_1(Y)X -g(X,Y)P_1\} \nonumber \\
 &\quad &- f_2 g(X,Y)P_2,
\end{eqnarray}
which satisfies
\begin{eqnarray}\label{Equat2}
 T(X,Y) =u(Y)\phi X -u(X)\phi Y,
\end{eqnarray}
and
\begin{eqnarray}\label{Equat3}
 (\nabla_X g)(Y,Z) &=& 2f_1u_1 (X)g(Y,Z)\nonumber \\
 &\quad & + f_2\{u_2(Y)g(X,Z)+u_2(Z)g(X,Y)\},
\end{eqnarray}
where $T$ is the torsion tensor of $\nabla$.
\end{theorem}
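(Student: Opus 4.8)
The plan is to separate the existence (verification) from the uniqueness, handling both through the \emph{difference tensor} of $\nabla$ and the Levi-Civita connection $\mathring{\nabla}$. I would write the prescribed formula \eqref{Equat1} as $\nabla_X Y = \mathring{\nabla}_X Y + A(X,Y)$, where
\begin{eqnarray*}
A(X,Y) &=& u(Y)\phi_1 X - u(X)\phi_2 Y - g(\phi_1 X,Y)P \\
&\quad& - f_1\{u_1(X)Y + u_1(Y)X - g(X,Y)P_1\} - f_2 g(X,Y)P_2 .
\end{eqnarray*}
First I would check that \eqref{Equat1} really defines a linear connection. Since $\mathring{\nabla}$ is a connection, it suffices to observe that $A$ is $C^\infty(M)$-bilinear: every summand is built from the tensorial data $u,u_1,u_2,\phi_1,\phi_2,g,P,P_1,P_2$ and the functions $f_1,f_2$, so $A(hX,Y)=hA(X,Y)=A(X,hY)$ for $h\in C^\infty(M)$. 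Consequently $\nabla_{hX}Y=h\nabla_X Y$ and $\nabla_X(hY)=(Xh)Y+h\nabla_X Y$, which are exactly the connection axioms.

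For the torsion \eqref{Equat2}, I would use that $\mathring{\nabla}$ is torsion-free, so $T(X,Y)=A(X,Y)-A(Y,X)$. The key simplifications are that the $f_1$ and $f_2$ terms are symmetric in $(X,Y)$ and hence drop out, and that $g(\phi_1 X,Y)=\Phi_1(X,Y)$ is symmetric by \eqref{Equat0}, so the term $g(\phi_1 X,Y)P$ also cancels. What remains is $u(Y)\phi_1 X-u(X)\phi_1 Y+u(Y)\phi_2 X-u(X)\phi_2 Y=u(Y)\phi X-u(X)\phi Y$, using $\phi=\phi_1+\phi_2$.

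The main computational obstacle is the non-metricity \eqref{Equat3}. Since $\mathring{\nabla}g=0$, I have $(\nabla_X g)(Y,Z)=-g(A(X,Y),Z)-g(Y,A(X,Z))$. The delicate point is that the decomposition \eqref{Equat0} makes $\phi_1$ self-adjoint and $\phi_2$ skew-adjoint with respect to $g$, i.e. $g(\phi_1 X,Y)=g(X,\phi_1 Y)$ and $g(\phi_2 X,Y)=-g(X,\phi_2 Y)$. These two facts, together with $g(P,\cdot)=u$ and $g(P_i,\cdot)=u_i$, force all the $u$-, $\phi_1$- and $\phi_2$-terms to cancel in pairs, and they also eliminate the extra $f_1$ cross terms, leaving precisely $2f_1 u_1(X)g(Y,Z)+f_2\{u_2(Y)g(X,Z)+u_2(Z)g(X,Y)\}$. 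Tracking these cancellations carefully is the heart of the argument.

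Finally, for uniqueness I would argue as follows. Suppose $\nabla$ and $\nabla'$ are two connections satisfying both \eqref{Equat2} and \eqref{Equat3}, and set $b(X,Y,Z)=g(\nabla_X Y-\nabla'_X Y,Z)$. Equality of torsions gives $b(X,Y,Z)=b(Y,X,Z)$ (symmetry in the first two slots), while equality of the non-metricities gives $b(X,Y,Z)=-b(X,Z,Y)$ (skew-symmetry in the last two slots). Chaining these two symmetries through the permutations of $(X,Y,Z)$ yields $b(X,Y,Z)=-b(X,Y,Z)$, hence $b\equiv 0$ and $\nabla=\nabla'$. This shows that the connection determined by the prescribed torsion and non-metricity is unique, and by the existence part it is the one given by \eqref{Equat1}.
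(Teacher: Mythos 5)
Your proof is correct, but note first that there is nothing in the paper to compare it with: the theorem is stated as a quotation of Tripathi's result \cite{tripathi1} and used as a black box, so your argument stands on its own as a self-contained proof. Within it, every step is sound. The difference tensor $A$ is manifestly $C^\infty(M)$-bilinear, so the formula does define a connection; since $\mathring{\nabla}$ is torsion-free, $T(X,Y)=A(X,Y)-A(Y,X)$, and the $f_1$-block, the $f_2$-block and the term $g(\phi_1 X,Y)P$ are symmetric in $(X,Y)$ (the last because $\Phi_1$ is the symmetric part), leaving $u(Y)\phi X-u(X)\phi Y$ with $\phi=\phi_1+\phi_2$, which follows from nondegeneracy of $g$. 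For the non-metricity, the two structural facts you isolate, namely $g(\phi_1 X,Y)=g(X,\phi_1 Y)$ and $g(\phi_2 X,Y)=-g(X,\phi_2 Y)$, do force all the cancellations you claim: writing it out gives $g(A(X,Y),Z)+g(Y,A(X,Z))=-2f_1u_1(X)g(Y,Z)-f_2\{u_2(Y)g(X,Z)+u_2(Z)g(X,Y)\}$, whose negative is exactly the stated expression for $(\nabla_X g)(Y,Z)$; since this computation is, by your own account, the heart of the argument, a final write-up should display it rather than assert it. Your uniqueness argument is the standard symmetry chase (symmetry of $b$ in the first two slots from equality of torsions, skewness in the last two from equality of $\nabla g$, and the six-fold permutation forcing $b=-b$), and it actually proves slightly more than the theorem asks: any two connections with the same torsion and the same non-metricity tensor coincide. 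This verify-then-uniqueness scheme is clean and complete; its only cost relative to a Koszul-type derivation (solving for $g(\nabla_X Y,Z)$ directly from the prescribed torsion and non-metricity) is that it does not explain how the explicit formula is discovered in the first place.
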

In this paper, we call this new connection as the \textit{Tripathi connection}. The aim of this paper is to consider warped product manifolds with a Tripathi connection and investigate relationships between the Tripathi connection of $M$ and those of the base $M_1$ and the fiber $M_2$.\\
The outline of the paper is as follows. In section 2, we recall some basic notion on warped product manifolds used in \cite{oneil}. In section 3, we establish the relationships between the Tripathi connection of a warped product and those of the base and the fiber. Finally, the last section is devoted to the special cases of Tripathi connections.

\section{Warped product Riemannian manifolds} \label{Prel}

Let $(M_1,g_1)$ and $(M_2,g_2)$ be two Riemannian manifolds and let $F>0$ be a smooth function on $M_1$. The warped
product $M=M_1 \times_F M_2$ is the product manifold $M_1 \times M_2$ equipped with the metric tensor
$$
g=\pi^* g_{1} + (F\circ \pi)^2 \sigma^* g_2.
$$ 
The manifold $M_{1}$ is called the base of $M$ and $M_{2}$ the fiber. It is easy to see that the fibers $\{p\}\times M_2=\pi^{-1}(p)$
and the leaves $M_1 \times \{q\} =\sigma^{-1}(q)$ are submanifolds of $M$, and the warped metric is characterized by
\begin{enumerate}
\item for each $q\in M_2$, the map $\pi|_{M_1 \times \{q\}}$ is a isometry onto $M_1$;
\item for each $p\in M_1$, the map $\sigma|_{\{p\} \times M_2}$ is a positive homothety onto $M_2$;
\item for each $(p,q)\in M$, the leaf $M_1 \times \{q\}$ and the fiber $\{p\}\times M_2$ are orthogonal at $(p,q)$.
\end{enumerate}
Vectors tangent to leaves are horizontal, vectors tangent to fibers are verticals. We denote by $\mathcal{H}$ the
orthogonal projection of $T_{(p,q)} M$ onto its horizontal subspace $T_{(p,q)} (M_1 \times \{q\})$, and by $\mathcal{V}$
the projection onto the vertical subspace $T_{(p,q)} (\{p\}\times M_2)$. Now we denote by $\mathrm{tan}$ for the projection $\mathcal{V}$ onto $T_{(p,q)} (\{p\}\times M_2)$ and $\mathrm{nor}$ for the projection onto $T_{(p,q)} (M_1 \times \{q\}) = (T_{(p,q)} (\{p\}\times M_2))^{\perp}$.

The relation of a warped product to the base $M_1$ is almost as simple as in the special case of a semi-Riemannian product. However, the relation to the fiber $M_2$ often involves the warping function $F$.
\begin{lemma}\cite{oneil}
If $h\in \mathcal{F}(M_1)$, then the gradient of the lift $h\circ \pi$ to $M=M_1 \times_F M_2$ is the lift to $M$
of the gradient of $h$ on $M_1$.
\end{lemma}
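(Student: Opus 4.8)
The plan is to use the defining property of the gradient together with the characterization of the warped metric on horizontal and vertical vectors. Write $G := \operatorname{grad}(h\circ\pi)$ for the gradient on $M$, and let $\operatorname{grad} h$ denote the gradient of $h$ on $(M_1,g_1)$, whose lift to $M$ is the horizontal vector field $\widetilde{\operatorname{grad} h}$ determined by $d\pi(\widetilde{\operatorname{grad} h}) = \operatorname{grad} h$. Recall that $G$ is characterized by $g(G,X) = X(h\circ\pi)$ for every $X \in \Gamma(TM)$, and decompose each such $X$ through the projections $\mathcal{H}$ and $\mathcal{V}$.

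First I would show that $G$ is horizontal. Any vertical vector $V$ is tangent to a fiber $\{p\}\times M_2$, and on such a fiber $h\circ\pi \equiv h(p)$ is constant; hence $V(h\circ\pi)=0$, so $g(G,V)=0$ for all vertical $V$. Since the horizontal and vertical subspaces are $g$-orthogonal, $g(\mathcal{V}G, V) = g(G,V) = 0$ for all vertical $V$, and as $g$ is nondegenerate on the vertical subspace this forces $\mathcal{V}G = 0$. Thus $G$ takes values in the horizontal distribution.

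Next I would pin down $G$ by testing it against horizontal lifts. For a vector field $W$ on $M_1$ with horizontal lift $\tilde W$, the chain rule gives $\tilde W(h\circ\pi) = (Wh)\circ\pi = g_1(\operatorname{grad} h, W)\circ\pi$, so that $g(G,\tilde W) = g_1(\operatorname{grad} h, W)\circ\pi$. On the other hand, because $\pi|_{M_1\times\{q\}}$ is an isometry, the warped metric restricts on horizontal lifts to the pullback of $g_1$, whence $g(\widetilde{\operatorname{grad} h}, \tilde W) = g_1(\operatorname{grad} h, W)\circ\pi$ as well. Therefore $g(G - \widetilde{\operatorname{grad} h}, \tilde W) = 0$ for every $W$; since both $G$ and $\widetilde{\operatorname{grad} h}$ are horizontal and $g$ restricted to the horizontal subspace is nondegenerate, we conclude $G = \widetilde{\operatorname{grad} h}$, which is the claim.

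I expect the only real subtlety to be bookkeeping rather than a genuine obstacle: the warping function $F$ never enters, because $h\circ\pi$ is constant along fibers (killing the vertical component of its gradient) and because the metric on horizontal directions coincides with the pullback of $g_1$, with no $F^2$ factor. This is precisely the sense, noted above, in which the relation of the warped product to its base is as simple as for an ordinary product; the factor $F$ would appear only if $h$ lived on the fiber $M_2$.
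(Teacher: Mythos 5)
Your proof is correct. The paper states this lemma without proof, citing O'Neill; your argument --- first showing $\operatorname{grad}(h\circ\pi)$ is horizontal because $h\circ\pi$ is constant on fibers, then identifying it with the lift of $\operatorname{grad}h$ by testing against horizontal lifts and using that $\pi$ restricted to each leaf $M_1\times\{q\}$ is an isometry (so the warping function $F$ never enters) --- is precisely the standard proof from that reference, so it matches the intended argument in both structure and substance.
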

Thus there should be no confusion if we simplify the notation by writing $h$ for $h\circ \pi$ and $\mathrm{grad}\, h$
for $\mathrm{grad} (h\circ \pi)$.  

The Levi-Civita connection of $M= M_1 \times_F M_2$ is related to those $M_1$ and $M_2$ as follows.
\begin{proposition}\cite{oneil}
Let $M= M_1 \times_F M_2$ be a warped product manifold. Denote by $\mathring{\nabla},{}^{M_1} \mathring{\nabla}$ and 
${}^{M_2}\mathring{\nabla}$ the Levi-Civita connections on $M,M_1$ and $M_2$, respectively. Then, for any $X,Y,Z \in \Gamma(TM_1)$ 
and $U,V,W\in \Gamma(TM_2)$, we have:
\begin{enumerate}
 \item $\mathring{\nabla}_X Y$ is the lift of ${}^{M_1} \mathring{\nabla}_X Y$.
 \item $\mathring{\nabla}_X V=\nabla_V X = (X\cdot F/F)V$.
 \item The component of $\mathring{\nabla}_V W$ normal to the fibers is $-(g(V,W)/F)\mathrm{grad}F$.
 \item The component of $\mathring{\nabla}_V W$ tangent to the fibers is the lift ${}^{M_2}\mathring{\nabla}_V W$.
\end{enumerate}
\end{proposition}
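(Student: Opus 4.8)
The plan is to derive all four formulas from the Koszul formula for the Levi-Civita connection $\mathring{\nabla}$ of $(M,g)$,
\begin{eqnarray*}
2g(\mathring{\nabla}_A B, C) &=& Ag(B,C) + Bg(A,C) - Cg(A,B) \\
&\quad& + g([A,B],C) - g([A,C],B) - g([B,C],A),
\end{eqnarray*}
evaluated on lifts of vector fields from $M_1$ and $M_2$. Since $g$ is nondegenerate, it suffices in each case to test the right-hand side against $C$ ranging over horizontal lifts (fields from $M_1$) and over vertical lifts (fields from $M_2$); this determines the horizontal and vertical parts of $\mathring{\nabla}_A B$ separately. Throughout I would rely on three structural facts: the bracket of two horizontal lifts is the horizontal lift of the $M_1$-bracket, the bracket of two vertical lifts is the vertical lift of the $M_2$-bracket, and the bracket of a horizontal lift with a vertical lift vanishes; the metric satisfies $g(X,V)=0$ for $X$ horizontal and $V$ vertical, with $g|_{\mathcal H}=\pi^*g_1$ and $g|_{\mathcal V}=F^2 g_2$; and a vertical field annihilates every function lifted from $M_1$, while a horizontal field annihilates every function lifted from $M_2$.

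For claim (1), I apply Koszul to $\mathring{\nabla}_X Y$. Tested against a horizontal $Z$, every term reduces to its $M_1$ counterpart, so the horizontal part of $\mathring{\nabla}_X Y$ is the lift of ${}^{M_1}\mathring{\nabla}_X Y$; tested against a vertical $V$, the orthogonality $g(X,V)=g(Y,V)=0$, the vanishing of $[X,V]$ and $[Y,V]$, and the fact that $V$ kills the $M_1$-function $g(X,Y)$ force every term to zero, so no vertical part survives. For claim (2), the identity $\mathring{\nabla}_X V - \mathring{\nabla}_V X = [X,V] = 0$ gives the symmetry at once. Testing $\mathring{\nabla}_X V$ against a horizontal $Y$ returns zero by the same orthogonality and bracket-vanishing arguments; testing against a vertical $W$, the only surviving term is $Xg(V,W)$, and here the warping enters: since $g(V,W)=F^2 g_2(V,W)$ and $X$ kills the $M_2$-function $g_2(V,W)$, one finds $Xg(V,W)=2F(XF)g_2(V,W)=2(XF/F)g(V,W)$, so the vertical part is $(XF/F)V$.

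For claims (3) and (4) I analyze $\mathring{\nabla}_V W$. Against a horizontal $X$, all bracket and orthogonality terms vanish except $-Xg(V,W)=-2(XF/F)g(V,W)$; rewriting $XF=g(\mathrm{grad}\,F,X)$---legitimate because the preceding Lemma identifies $\mathrm{grad}\,F$ with the horizontal lift of the gradient of $F$ on $M_1$---recasts this as $-2g\big((g(V,W)/F)\,\mathrm{grad}\,F,\,X\big)$, so the normal component is $-(g(V,W)/F)\,\mathrm{grad}\,F$. Against a vertical $U$ every field is vertical, each inner product is $F^2 g_2$, and---crucially---each vertical field annihilates the $M_1$-function $F^2$, so the entire right-hand side factors as $F^2$ times the $M_2$-Koszul expression for $2g_2({}^{M_2}\mathring{\nabla}_V W, U)$; since $g({}^{M_2}\mathring{\nabla}_V W, U)=F^2 g_2({}^{M_2}\mathring{\nabla}_V W, U)$, the tangential component is the lift of ${}^{M_2}\mathring{\nabla}_V W$.

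The main obstacle is bookkeeping rather than conceptual difficulty: one must track scrupulously which scalar functions are lifted from $M_1$ and which from $M_2$, so as to know exactly which directional derivatives vanish. The single genuinely computational point is the warping identity $Xg(V,W)=2(XF/F)g(V,W)$, which produces both the conformal factor in (2) and the gradient term in (3); every remaining term is discarded by orthogonality or by the bracket relations among horizontal and vertical lifts.
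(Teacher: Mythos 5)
Your proof is correct, and it is essentially the argument this paper relies on: the paper itself states this proposition without proof (citing O'Neill), but the method you use --- the Koszul formula tested separately against horizontal and vertical lifts, the bracket and orthogonality relations between lifts, and the warping identity $X\cdot g(V,W)=2(XF/F)\,g(V,W)$ --- is exactly the technique both of O'Neill's original proof and of the paper's own proof of its Proposition 3.1, which generalizes this statement to the Tripathi connection. No gaps: each of the four claims is fully justified, including the two points most often glossed over (that $\mathring{\nabla}_X Y$ has no vertical part, and that the all-vertical Koszul expression factors as $F^2$ times the $M_2$-Koszul expression because vertical fields annihilate $F$).
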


\section{Warped product manifolds endowed with a Tripathi connection}

In this section, we consider warped product manifolds with respect to the Tripathi connection and we prove
the following results.

\begin{proposition}
Let $M= M_1 \times_F M_2$ be a warped product manifold. Denote by $\nabla,{}^{M_1} \nabla$ and ${}^{M_2}\nabla$ the Tripathi connections on $M,M_1$ and $M_2$ respectively. Then, for any $X,Y \in \Gamma(TM_1)$, $V,W\in \Gamma(TM_2)$ and $P,P_1,P_2 \in \Gamma(TM_1)$, we have:
\begin{enumerate}
 \item $\nabla_X Y \in \Gamma(TM_1)$ is the lift of ${}^{M_1} \nabla_X Y$ on $M_1$.
 \item $\nabla_X V=(X(F)/F)V -u(X)\phi_2 V -f_1 u_1(X)V\;\; \mbox{and} \;\;\\
 \nabla_V X = (X(F)/F)V -u(X)\phi_2 V -f_1u_1(X)V +u(X)\phi V$.
 \item $\mathrm{nor} \nabla_V W = -[g(V,W)/F]\nabla F +g(\phi_2 V,W)P +f_1g(V,W)P_1 -g(\phi V,W)P.$
 \item $\mathrm{tan} \nabla_V W $ is the lift of ${}^{M_2} \nabla_X Y$ on $M_2$.
\end{enumerate}
\end{proposition}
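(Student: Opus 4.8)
The plan is to feed the Levi-Civita relations of the preceding proposition directly into the defining identity \eqref{Equat1} of the Tripathi connection, and then to prune and regroup the resulting terms using the geometry of the warped product. Two structural facts do all the work. First, the leaves and the fibers are $g$-orthogonal, so for horizontal $X$ and vertical $V$ one has $g(X,V)=0$; since $P,P_1,P_2\in\Gamma(TM_1)$ are horizontal, this forces $u(V)=u_1(V)=u_2(V)=0$ for every vertical $V$, which annihilates most of the correction terms in \eqref{Equat1}. Second, I will use that $\phi_1$ and $\phi_2$ (hence $\phi$) carry horizontal vectors to horizontal vectors, which is what keeps $\nabla_X Y$ horizontal in case (1).

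For case (1), with $X,Y\in\Gamma(TM_1)$, every term on the right of \eqref{Equat1} is built from $\mathring{\nabla}_X Y$, from $\phi_1 X,\phi_2 Y\in\Gamma(TM_1)$, and from the horizontal vectors $P,P_1,P_2$, with scalar coefficients that depend only on base data; since $\mathring{\nabla}_X Y$ is the lift of ${}^{M_1}\mathring{\nabla}_X Y$, the whole expression is exactly the lift of the base Tripathi connection ${}^{M_1}\nabla_X Y$. For case (2) I substitute $\mathring{\nabla}_X V=\mathring{\nabla}_V X=(X(F)/F)V$ and discard the terms carrying $u(V)$, $u_1(V)$, $g(\phi_1 X,V)$ and $g(X,V)$, all of which vanish by orthogonality; the residue gives $\nabla_X V$ immediately, while for $\nabla_V X$ the surviving term $u(X)\phi_1 V$ is rewritten through $\phi_1=\phi-\phi_2$ as $u(X)\phi V-u(X)\phi_2 V$, producing precisely the stated combination.

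Cases (3) and (4) both issue from the single reduction $\nabla_V W=\mathring{\nabla}_V W-g(\phi_1 V,W)P+f_1 g(V,W)P_1-f_2 g(V,W)P_2$ (every $u$- and $u_1$-weighted term having dropped out), split into its normal and tangential components. Because $P,P_1,P_2$ are horizontal they feed only into $\mathrm{nor}\,\nabla_V W$; collecting them with $\mathrm{nor}\,\mathring{\nabla}_V W=-(g(V,W)/F)\,\mathrm{grad}\,F$ and applying the rewrite $-g(\phi_1 V,W)=g(\phi_2 V,W)-g(\phi V,W)$, again from $\phi_1=\phi-\phi_2$, brings the $\phi$-terms into the form required by case (3). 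The tangential component retains only $\mathrm{tan}\,\mathring{\nabla}_V W$, which by the preceding proposition is the lift of ${}^{M_2}\mathring{\nabla}_V W$.

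The main obstacle is case (4): to promote this to the lift of the fiber Tripathi connection ${}^{M_2}\nabla_V W$ I must verify that the Tripathi data induced on the fiber is trivial. Here the hypothesis $P,P_1,P_2\in\Gamma(TM_1)$ is decisive, since the vertical projections of $P,P_1,P_2$ vanish, so the one-forms and vector fields induced on $M_2$ are zero and the fiber version of \eqref{Equat1} collapses to ${}^{M_2}\mathring{\nabla}$; hence $\mathrm{tan}\,\nabla_V W$ is the lift of ${}^{M_2}\nabla_V W$. The remaining effort is pure bookkeeping, namely tracking signs and confirming that each regrouping through $\phi=\phi_1+\phi_2$ lands on the exact expression in the statement.
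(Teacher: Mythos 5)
Your route is genuinely different from the paper's: you substitute O'Neill's Levi-Civita formulas directly into the defining identity \eqref{Equat1} and prune terms using the horizontality of $P,P_1,P_2$, whereas the paper works through the Koszul formula, extracts $\nabla_X V$ from a warped-metric computation, and then obtains $\nabla_V X$ from the torsion \eqref{Equat2} and $\mathrm{nor}\,\nabla_V W$ from the identity $V\cdot g(X,W)=g(\nabla_V X,W)+g(X,\nabla_V W)$. For items (1), (2) and (4) your substitution argument is correct and reproduces the stated formulas (both you and the paper implicitly assume that $\phi_1,\phi_2$ send horizontal vectors to horizontal ones, and your treatment of (4) --- checking that the Tripathi data induced on the fiber vanishes because $P,P_1,P_2$ are horizontal, so that ${}^{M_2}\nabla={}^{M_2}\mathring{\nabla}$ --- is more careful than the paper's bare appeal to O'Neill).

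In case (3), however, your own computation does not land where you claim, and this is a genuine gap. Your reduction $\nabla_V W=\mathring{\nabla}_V W-g(\phi_1 V,W)P+f_1 g(V,W)P_1-f_2 g(V,W)P_2$ is correct, and after the rewrite $-g(\phi_1 V,W)=g(\phi_2 V,W)-g(\phi V,W)$ its normal part is
\begin{equation*}
\mathrm{nor}\,\nabla_V W=-[g(V,W)/F]\nabla F+g(\phi_2 V,W)P-g(\phi V,W)P+f_1 g(V,W)P_1-f_2 g(V,W)P_2,
\end{equation*}
which carries the term $-f_2 g(V,W)P_2$ that is absent from item (3) of the statement; since $g(V,W)$ is generically nonzero and $P_2$ is an arbitrary horizontal field, this term cannot be discarded. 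So, as written, you have not proved (3) --- you have proved a different formula, and your remark that the regrouping ``brings the $\phi$-terms into the form required by case (3)'' silently suppresses a term that does not vanish. The source of the mismatch is not your arithmetic but the paper itself: its proof of (3) invokes $V\cdot g(X,W)=g(\nabla_V X,W)+g(X,\nabla_V W)$, i.e.\ metric compatibility, which the Tripathi connection fails to satisfy; by \eqref{Equat3} the nonmetricity contributes $(\nabla_V g)(X,W)=f_2 u_2(X)g(V,W)$, which is exactly the $f_2$-term your direct computation retains. A complete answer must therefore either flag item (3) as stated as erroneous (your formula being the corrected version) or explain why $f_2 g(V,W)P_2$ disappears; it does not, so the burden falls on the statement, and your proposal needed to say so explicitly rather than assert agreement.
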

\begin{proof}
From the Koszul formula we have
\begin{eqnarray}\label{koszul1}
 2g(\mathring{\nabla}_X Y,Z) &=& X\cdot g(Y,Z) +Y\cdot g(X,Z) -Z\cdot g(X,Y) \nonumber \\
 &\quad & -g(X,[Y,Z]) -g(Y,[X,Z]) +g(Z,[X,Y]),
\end{eqnarray}
for all $X,Y,Z$ on $M$, where $\mathring{\nabla}$ is the Levi-Civita connection of $M$. 

(1) For any vector fields $X,Y \in \Gamma(TM_1)$ and $V\in \Gamma(TM_2)$, the equation (\ref{koszul1}) reduces to
\begin{eqnarray}\label{koszul2}
 2g(\mathring{\nabla}_X Y,V)=-V\cdot g(X,Y) + g(V,[X,Y])
\end{eqnarray}
since $g(X,V)=g(Y,V)=0$ and $[X,V]=[Y,V]=0$ from \cite{oneil}. Firstly, since $X,Y$ are lifts
from $M_1$ and $V$ is vertical, $g(X,Y)$ is constant on fibers, which means that $V\cdot g(X,Y)=0$. On the other hand,
$[X,Y]$ is tangent to leaves, hence $g(V,[X,Y])=0$. From equation (\ref{Equat1}), the equation (\ref{koszul2}) becomes
\begin{eqnarray}\label{koszul3}
g(\nabla_X Y,V) &=& u(Y)g(\phi_1 X,V) -u(X)g(\phi_2 Y,V) +g(\phi_1 X,Y)g(P,V)\nonumber \\
 &-& f_1 \Big[u_1 (X)g(Y,V) +u_1 (Y)g(X,V) -g(X,Y)g(P_1,V)\Big]\nonumber \\
 &- & f_2 g(X,Y)g(P_2,V)
\end{eqnarray}
for any vector fields $X,Y \in \Gamma(TM_1)$ and $V\in \Gamma(TM_2)$. From the equation (\ref{Equat0}), since 
$X,Y$ are lifts from $M_1$ and $V$ is vertical, we have
$$
g(\phi_1 X,V) =\Phi_1(X,V) =0 \quad \mbox{and}\quad  g(\phi_2 Y,V)=\Phi_2(Y,V) =0.
$$
Hence, the equation (\ref{koszul3}) reduces to 
\begin{eqnarray}\label{koszul4}
g(\nabla_X Y,V) &=& -g(\phi_1 X,Y)g(P,V) +f_1 g(X,Y)g(P_1,V)\nonumber \\
&\quad & -f_2 g(X,Y)g(P_2,V).
\end{eqnarray}
Since $P,P_1,P_2 \in \Gamma(TM_1)$, from (\ref{koszul4}) we get
$$
g(\nabla_X Y,V) =0.
$$
Thus $g(\nabla_X Y,V)=0$ for all $V\in \Gamma(TM_2)$, so $\nabla_X Y$ is horizontal. This gives us (1) of the Proposition.

(2) By definition of the covariant derivative with respect to the Tripathi metric connection, we have
$$
g(\nabla_X V,Y) = X\cdot g(V,Y) - g(V,\nabla_X Y)
$$
for all vector fields $X,Y$ on $M_1$ and $V$ on $M_2$. Since, $g(V,Y)=0$ and from (\ref{koszul4}), the above equation 
turns into
\begin{eqnarray}\label{koszul5}
 g(\nabla_X V,Y) &=& g(\phi_1 X,Y)g(P,V) -f_1 g(X,Y)g(P_1,V)\nonumber \\
&\quad & +f_2 g(X,Y)g(P_2,V).
\end{eqnarray}
Since $P,P_1,P_2 \in \Gamma(TM_1)$, we get $g(\nabla_X V,Y)=0$ for all $Y\in \Gamma(TM_1)$, so $\nabla_X V$ is vertical.
From the equation (\ref{koszul1}) and the definition of the Tripathi connection (\ref{Equat1}), we obtain
\begin{align*}
&2g(\nabla_X V,W) = X\cdot g(V,W) +V\cdot g(X,W) -W\cdot g(X,V) \nonumber \\
 & -g(X,[V,W]) -g(V,[X,W]) +g(W,[X,V]) \nonumber \\
 & +2u(V)g(\phi_1 X,W) -2u(X)g(\phi_2 V,W) -2g(\phi_1 X,V)g(P,W)\nonumber \\
 & -2f_1 \Big[u_1 (X)g(V,W) +u_1 (V)g(X,W) -g(X,V)g(P_1,W)\Big]\nonumber \\
 & -2f_2 g(X,V)g(P_2,W),
\end{align*}
for any vector fields $X\in \Gamma(TM_1)$ and $V,W\in \Gamma(TM_2)$. Since $g(X,W)=g(X,V)=0$ and $[X,V]=[X,W]=0$,
we have
\begin{align}\label{koszul6}
&2g(\nabla_X V,W) = X\cdot g(V,W)  -g(X,[V,W])  \nonumber \\
 & +2u(V)g(\phi_1 X,W) -2u(X)g(\phi_2 V,W) -2g(\phi_1 X,V)g(P,W)\nonumber \\
 &  -2f_1 \Big[u_1 (X)g(V,W) \Big].
\end{align}
Since $X, \phi_1 X$ are horizontals and $\phi_2 V, [V,W]$ are verticals, we have:
$$
g(X,[V,W])=0, \quad g(\phi_1 X,W)=g(\phi_1 X,V)=0.
$$
Hence we find
\begin{eqnarray}\label{koszul7}
 2g(\nabla_X V,W) &=& X\cdot g(V,W) -2u(X)g(\phi_2 V,W) \nonumber \\
 &\quad & -2f_1 u_1 (X)g(V,W).
\end{eqnarray}
From the definition of the warped product metric, we have
$$
g(V,W)(p,q)=(F\circ \pi)^2 (p,q)g_{M_2}(V_q,W_q).
$$
Then by making use of $F$ instead of $F\circ \pi$, we get
$$
g(V,W)=F^2 (g_{M_2} (V,W)\circ \sigma).
$$
Hence, we get
$$
X\cdot g(V,W)=2 F(X\cdot F)(g_{M_2}(V,W)\circ \sigma) +F^2 X\cdot (g_{M_2} (V,W)\circ \sigma).
$$
Since the term $g_{M_2} (V,W)\circ \sigma$ is constant on leaves, the above equation turns into
\begin{eqnarray}\label{equation8}
 X\cdot g(V,W)=2 F (X\cdot F)(g_{M_2}(V,W)\circ \sigma).
\end{eqnarray}
By making use (\ref{equation8}) in (\ref{koszul7}), we obtain
\begin{eqnarray*}
 g(\nabla_X V,W) = (X\cdot F/F)(g(V,W) -u(X)g(\phi_2 V,W) -f_1 u_1 (X)g(V,W).
\end{eqnarray*}
Taking $P,P_1,P_2 \in \Gamma(TM_1)$, we have
\begin{eqnarray}\label{Equat4}
 \nabla_X V = (X\cdot F/F)V -u(X)\phi_2 V -f_1 u_1 (X)V.
\end{eqnarray}
By using the definition of the torsion tensor of the Tripathi connection, we have
\begin{eqnarray*}
 g(\nabla_X V,W) =g(\nabla_V X,W) -u(X)g(\phi V,W)
\end{eqnarray*}
which means that
\begin{align}
 g(\nabla_V X,W) &=g([(X\cdot F/F)V -u(X)\phi_2 V -f_1u_1(X)V],W)\nonumber\\
 &+u(V)g(\phi V,W).
\end{align} 
Then we get
\begin{eqnarray}
 \nabla_V X = (X\cdot F/F)V -u(X)\phi_2 V -f_1u_1(X)V +u(X)\phi V,
\end{eqnarray}
so we obtain the (2) of the Proposition.

(3) From the covariant derivative with respect to the Tripathi connection, we have
$$
V\cdot g(X,V) =g(\nabla_V X,W) + g(X,\nabla_V W),
$$
for any vector $X$ on $M_1$ and $V,W$ on $M_2$. Since $g(X,V)=0$, the above equation reduces to
\begin{align*}
 &g(\nabla_V W,X)  =  -g(\nabla_V X,W)\\
 &=  -g([(X\cdot F/F)V -u(X)\phi_2 V -f_1u_1(X)V +u(X)\phi V],W)\\
 &=  -g([(g(\nabla F,X)/F)V - g(P,X)\phi_2 V \\
 &  -f_1 g(P_1,X)V +g(P,X)\phi V],W).
\end{align*}
Then, we get
\begin{eqnarray*}
 \mathrm{nor}\nabla_V W &=& -[g(V,W)/F]\nabla F +g(\phi_2 V,W)P \\
 &\quad & +f_1g(V,W)P_1 -g(\phi V,W)P.
\end{eqnarray*}

(4) Since $V$ and $W$ are tangent to all fibers, then on a fiber, $\mathrm{tan} \nabla_V W$ is the
fiber covariant derivative applied to the restrictions of $V$ and $W$ to that fiber. See \cite{oneil}
for more detail. Thus the proof of the proposition is completed.
\end{proof}
\begin{proposition}
Let $M= M_1 \times_F M_2$ be a warped product manifold. Denote by $\nabla,{}^{M_1} \nabla$ and ${}^{M_2}\nabla$ the 
Tripathi connections on $M,M_1$ and $M_2$ respectively. Then, for any $X,Y \in \Gamma(TM_1)$, $V,W\in \Gamma(TM_2)$ and
$P,P_1,P_2 \in \Gamma(TM_2)$, we have:
\begin{enumerate}
 \item $\mathrm{nor} \nabla_X Y$ is the lift of $\nabla_X Y$ on $M_1$.
 \item $\mathrm{tan} \nabla_X Y =-g(\phi_1 X,Y)P + f_1 g(X,Y)P_1 -f_2 g(X,Y)P_2$.
 \item $\mathrm{tan} \nabla_X V =(X\cdot F/F)V -u(X)\phi_2 V -f_1 u(X)V $and
 $\mathrm{nor} \nabla_X V= g(P,V)\phi_1 X -f_1g(P_1,V)X +f_2g(P_2,V)X.$
 \item \begin{eqnarray*} 
 \nabla_V X &=& (X\cdot F/F)V -u(X)\phi_2 V -f_1 u(X)V \nonumber \\
 & \quad & +g(P,V)\phi_1 X -f_1g(P_1,V)X +f_2g(P_2,V)X \nonumber \\
 &\quad & -u(V)\phi X +u(X)\phi V.
 \end{eqnarray*}
 \item $\mathrm{nor} \nabla_V W = -[g(V,W)/F] \nabla F + g(\phi_2 V,W)P +f_1 g(V,W)P -g(\phi V,W)P$.
 \item $\mathrm{tan} \nabla_V W$ is the lift of $\nabla_V W$ on $M_2$.
\end{enumerate}
\end{proposition}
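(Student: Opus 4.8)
The plan is to follow the same pattern as in the preceding proposition, replacing the hypothesis that $P,P_1,P_2$ are horizontal by the present hypothesis that they are vertical. Concretely, I would evaluate the defining formula (\ref{Equat1}) on each of the four types of argument pairs --- $(X,Y)$, $(X,V)$, $(V,X)$ and $(V,W)$ with $X,Y\in\Gamma(TM_1)$ and $V,W\in\Gamma(TM_2)$ --- and then split each result into its horizontal ($\mathrm{nor}$) and vertical ($\mathrm{tan}$) parts. Three ingredients drive every computation: the O'Neill decomposition of the Levi-Civita term $\mathring{\nabla}$ recalled in Section \ref{Prel}; the vanishing of the one-forms on horizontal vectors, i.e. $u(X)=g(P,X)=0$, $u_1(X)=0$, $u_2(X)=0$ for $X\in\Gamma(TM_1)$, which is now forced by $P,P_1,P_2\in\Gamma(TM_2)$; and the decoupling of $\phi_1,\phi_2$ already used above, namely $\Phi_i(X,V)=0$, so that $\phi_1,\phi_2$ map horizontal vectors to horizontal ones and vertical vectors to vertical ones. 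The bookkeeping of the split is then mechanical: each Tripathi correction term is assigned to $\mathrm{nor}$ or $\mathrm{tan}$ according as the $P_i$ it carries is horizontal or vertical, and here every such $P_i$ is vertical.

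For the order of execution I would first take $X,Y\in\Gamma(TM_1)$: substituting into (\ref{Equat1}) kills the $u(X),u(Y),u_1(X),u_1(Y)$ terms and leaves $\nabla_X Y=\mathring{\nabla}_X Y-g(\phi_1 X,Y)P+f_1 g(X,Y)P_1-f_2 g(X,Y)P_2$; since the last three terms are vertical while $\mathring{\nabla}_X Y$ is the horizontal lift of ${}^{M_1}\mathring{\nabla}_X Y$, reading off the two parts gives (1) and (2) at once, the identification in (1) using that the induced Tripathi data on the base is trivial (its defining vector fields are vertical), so that ${}^{M_1}\nabla={}^{M_1}\mathring{\nabla}$. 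Next I would compute $\nabla_X V$ directly from (\ref{Equat1}), obtaining the vertical term $(X\cdot F/F)V$ from $\mathring{\nabla}_X V$ together with the horizontal terms $g(P,V)\phi_1 X-f_1 g(P_1,V)X$, which yields (3). Part (4) then follows purely algebraically from the torsion identity (\ref{Equat2}), $\nabla_V X=\nabla_X V-T(X,V)=\nabla_X V-u(V)\phi X+u(X)\phi V$, using $[X,V]=0$. Finally, for $V,W\in\Gamma(TM_2)$ the normal part of $\nabla_V W$ is supplied only by the O'Neill term $-(g(V,W)/F)\nabla F$, all Tripathi corrections being vertical and hence tangential (part (5)), while the tangential part reassembles, fiber by fiber, into the intrinsic Tripathi connection of the fiber, giving (6) in the spirit of O'Neill's corresponding statement for $\mathring{\nabla}$.

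The delicate point, and the one place where this proposition genuinely differs from its predecessor, is the non-metricity of $\nabla$. In the previous proposition the defect tensor (\ref{Equat3}) vanished on all the arguments that occurred, because $u_2$ was evaluated on vertical vectors with $P_2$ horizontal; consequently a Koszul/metric-compatibility manipulation was legitimate there. Here $P_2$ is vertical, so $u_2(V)=g(P_2,V)$ no longer vanishes and $(\nabla_X g)(V,Y)=f_2 u_2(V)g(X,Y)\neq 0$ in general. For this reason I would avoid the metric-compatibility route entirely and read every component straight off the defining formula (\ref{Equat1}), taking care not to introduce a spurious $f_2 g(P_2,V)$ correction that a naive use of $X\cdot g(V,Y)=g(\nabla_X V,Y)+g(V,\nabla_X Y)$ would produce. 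The only other care needed is in (6): since the fiber metric $g|_{\{p\}\times M_2}=F(p)^2 g_2$ is a constant rescaling of $g_2$ on each fiber, the Levi-Civita and hence the Tripathi structures restrict consistently fiberwise, and the identification with ${}^{M_2}\nabla$ is to be understood on each fiber separately, exactly as for the warped-product Levi-Civita connection.
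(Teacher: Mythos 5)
Your proposal is mathematically sound, and it takes a genuinely different route from the paper. The paper does not recompute anything from scratch here: it recycles the identities (\ref{koszul4}), (\ref{koszul5}) and (\ref{Equat4}) established in the proof of the preceding proposition, re-reads them under the new hypothesis $P,P_1,P_2\in\Gamma(TM_2)$, and obtains item (4) from the torsion identity exactly as you do. You instead substitute directly into the defining formula (\ref{Equat1}) and split horizontally/vertically via O'Neill. The difference is not cosmetic: (\ref{koszul5}) was derived from the metric-compatibility identity $g(\nabla_X V,Y)=X\cdot g(V,Y)-g(V,\nabla_X Y)$, which is legitimate only when the non-metricity tensor (\ref{Equat3}) vanishes on the arguments involved. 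That is the case in the preceding proposition (there $P_2$ is horizontal, so $u_2(V)=0$), but it fails here: with $P_2$ vertical one has $(\nabla_X g)(V,Y)=f_2u_2(V)g(X,Y)\neq 0$ in general. Your diagnosis of this point is exactly right, and it is the key issue.

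The consequence, which you should state more bluntly, is that your proof establishes a \emph{corrected} proposition, not the proposition as written. Direct substitution into (\ref{Equat1}) gives $\nabla_X V=(X\cdot F/F)V+g(P,V)\phi_1 X-f_1g(P_1,V)X$, so the term $+f_2g(P_2,V)X$ in item (3), inherited by item (4), is precisely the spurious contribution of the ignored defect. A quick sanity check makes this concrete: take $\phi=0$, $f_1=0$, $P=P_1=0$, $f_2\neq 0$ and $P_2$ vertical; then (\ref{Equat1}) yields $\nabla_X V=\mathring{\nabla}_X V=(X\cdot F/F)V$, which is purely vertical, whereas item (3) predicts the nonzero horizontal component $f_2g(P_2,V)X$. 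Likewise, in item (5) the terms $g(\phi_2 V,W)P+f_1g(V,W)P-g(\phi V,W)P$ are vertical under the present hypothesis (they are carried over from the previous proposition, where the $P$'s were horizontal), so the correct statement is $\mathrm{nor}\,\nabla_V W=-[g(V,W)/F]\nabla F$, exactly as you claim. Two further remarks. First, your observation that the induced Tripathi data on $M_1$ is trivial, so that ${}^{M_1}\nabla={}^{M_1}\mathring{\nabla}$, is needed to make item (1) well-formulated at all; the paper never addresses this. Second, in item (6) the identification of $\mathrm{tan}\,\nabla_V W$ with ${}^{M_2}\nabla_V W$ is more delicate than your closing remark suggests, because the Tripathi correction terms, unlike the Levi-Civita connection, are not invariant under the constant rescaling $F(p)^2 g_2$ of the fiber metric; this gap is present in the paper's proof as well, so it is not a defect of your argument relative to theirs, but it deserves an explicit hypothesis or comment.
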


\begin{proof}
From equation (\ref{koszul4}), we obtain 
\begin{eqnarray}\label{koszul8}
 g(\nabla_X Y,V) &=& -g(\phi_1 X,Y)g(P,V) +f_1 g(X,Y)g(P_1,V)\nonumber \\
&\quad & -f_2 g(X,Y)g(P_2,V).
\end{eqnarray}
Since $P,P_1,P_2 \in \Gamma(TM_2)$, from (\ref{koszul8}) we get
$$
\nabla_X Y =-g(\phi_1 X,Y)P + f_1 g(X,Y)P_1 -f_2 g(X,Y)P_2
$$
which gives us (1) and (2) of the Proposition.\\
Similarly, from equation (\ref{koszul5}), we obtain
\begin{eqnarray}\label{koszul9}
 g(\nabla_X V,Y) &=& g(\phi_1 X,Y)g(P,V) -f_1 g(X,Y)g(P_1,V)\nonumber \\
&\quad & +f_2 g(X,Y)g(P_2,V).
\end{eqnarray}
Since $P,P_1,P_2 \in \Gamma(TM_2)$ and by the use of (\ref{Equat4}), we get
\begin{eqnarray}\label{3.13}
 \nabla_X V &=& (X\cdot F/F)V -u(X)\phi_2 V -f_1 u(X)V \nonumber \\
 & \quad & +g(P,V)\phi_1 X -f_1g(P_1,V)X +f_2g(P_2,V)X,
\end{eqnarray}
which implies that
\begin{eqnarray*}
 \mathrm{tan} \nabla_X V =(X\cdot F/F)V -u(X)\phi_2 V -f_1 u(X)V 
 \end{eqnarray*} 
and
 \begin{eqnarray*}
 \mathrm{nor} \nabla_X V= g(P,V)\phi_1 X -f_1g(P_1,V)X +f_2g(P_2,V)X.
\end{eqnarray*}
Hence we have (3).\\
From the definition of the torsion tensor, we have
$$
\nabla_V X =\nabla_X V -T(X,V).
$$
By by making use (\ref{Equat2}) and (\ref{3.13}), we get
\begin{eqnarray*}
 \nabla_V X &=& (X\cdot F/F)V -u(X)\phi_2 V -f_1 u(X)V \nonumber \\
 & \quad & +g(P,V)\phi_1 X -f_1g(P_1,V)X +f_2g(P_2,V)X \nonumber \\
 &\quad & -u(V)\phi X +u(X)\phi V
\end{eqnarray*}
which complete the proof of $(4)$. Similarly taking $P,P_1,P_2 \in \Gamma(TM_2)$, we obtain
\begin{eqnarray*}
 g(\nabla_V W,X) =  -g(\nabla_V X,W).
\end{eqnarray*}
We obtain that
$$
\mathrm{nor}\nabla_V W = -[g(V,W)/F] \nabla F + g(\phi_2 V,W)P +f_1 g(V,W)P -g(\phi V,W)P
$$
Hence, we complete the proof of the Proposition.
\end{proof}

\section{Particular cases}

 
A linear connection is said to be a semi-symmetric connection if its torsion 
tensor $T$ is of the form
$$
T(X,Y)=u(Y)X -u(X)Y,
$$
where $u$ is a $1$-form. The notion of a semi-symmetric metric connection on a Riemannian manifold was introduced by H. A. Hayden in \cite{hayden} and K. 
Yano in \cite{yano} studies some of its properties. In equation (\ref{Equat1}), when $f_1=0=f_2, \phi=\mathrm{Id}$, then we obtain a semi-symmetric metric connection given by (K. Yano \cite{yano})
\begin{eqnarray*}
 \nabla_X Y =\mathring{\nabla}_X Y +u(Y)X -g(X,Y)P.
\end{eqnarray*}
We have the following results:
\begin{corollary}\cite{sular}
Let $M= M_1 \times_F M_2$ be a warped product manifold. Denote by $\nabla,{}^{M_1} \nabla$ and ${}^{M_2}\nabla$ the semi-symmetric metric connections on $M,M_1$ and $M_2$ respectively. Then, for any $X,Y \in \Gamma(TM_1)$, $V,W\in \Gamma(TM_2)$ and $P \in \Gamma(TM_1)$, we have:
\begin{enumerate}
 \item $\nabla_X Y \in \Gamma(TM_1)$ is the lift of ${}^{M_1} \nabla_X Y$ on $M_1$.
 \item $\nabla_X V=(X\cdot F/F)V \quad \mbox{and} \quad \nabla_V X = (X\cdot F/F)V +u(X)\phi V$.
 \item $\mathrm{nor} \nabla_V W = -[g(V,W)/F]\nabla F  -g(V,W)P.$
 \item $\mathrm{tan} \nabla_V W $ is the lift of ${}^{M_2} \nabla_X Y$ on $M_2$.
\end{enumerate}
\end{corollary}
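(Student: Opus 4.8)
The plan is to specialize the Tripathi-connection formula (\ref{Equat1}) to the semi-symmetric case by substituting $f_1=0=f_2$ and $\phi=\mathrm{Id}$, and then read off the corresponding specialization of the first Proposition. Under these substitutions the Tripathi connection collapses to $\nabla_X Y=\mathring{\nabla}_X Y+u(Y)X-g(X,Y)P$, exactly as recorded in the text just before the corollary. Since the hypothesis places $P\in\Gamma(TM_1)$, this corollary is precisely the case of the first Proposition with the warping-function data carried through and the extra $f_1,f_2$-terms suppressed. So rather than redo the Koszul-formula computation from scratch, I would note that all four assertions follow by setting $f_1=f_2=0$ and $\phi_1=\phi_2=\phi=\mathrm{Id}$ in the four items of Proposition~3.1.

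First I would verify item (1): in the specialized formula the extra terms $-g(\phi_1X,Y)P+f_1 g(X,Y)P_1-f_2 g(X,Y)P_2$ reduce, after setting $f_1=f_2=0$, to $-g(X,Y)P$, which is horizontal because $P\in\Gamma(TM_1)$; hence the argument of Proposition~3.1(1) showing $g(\nabla_X Y,V)=0$ goes through verbatim, and $\nabla_X Y$ remains the lift of ${}^{M_1}\nabla_X Y$. Next, for item (2) I would take the two formulas of Proposition~3.1(2), namely $\nabla_X V=(X\cdot F/F)V-u(X)\phi_2 V-f_1 u_1(X)V$ and the companion expression for $\nabla_V X$, and substitute $f_1=0$ together with $\phi_2=0$ (the skew-symmetric part of the identity vanishes, since $\mathrm{Id}$ is symmetric). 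This immediately yields $\nabla_X V=(X\cdot F/F)V$ and $\nabla_V X=(X\cdot F/F)V+u(X)\phi V=(X\cdot F/F)V+u(X)V$.

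For items (3) and (4) I would proceed identically. In Proposition~3.1(3) the normal component is $-[g(V,W)/F]\nabla F+g(\phi_2 V,W)P+f_1 g(V,W)P_1-g(\phi V,W)P$; setting $f_1=0$ kills the $P_1$-term, $\phi_2=0$ kills the first $P$-term, and $\phi=\mathrm{Id}$ turns $-g(\phi V,W)P$ into $-g(V,W)P$, giving exactly $\mathrm{nor}\,\nabla_V W=-[g(V,W)/F]\nabla F-g(V,W)P$. Item (4) is unaffected by the specialization, since the tangential component of $\nabla_V W$ on a fiber is intrinsic to the fiber and equals the lift of ${}^{M_2}\nabla_V W$ exactly as in Proposition~3.1(4).

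The computation here is entirely routine once the reduction is set up; the only point requiring genuine care is the bookkeeping of which terms vanish. In particular, the \emph{one subtle step} is observing that $\phi_2=0$ under $\phi=\mathrm{Id}$: because $\Phi_2$ is the skew-symmetric part of $\Phi(X,Y)=g(\phi X,Y)$ and $g(\mathrm{Id}\,X,Y)=g(X,Y)$ is symmetric, the skew part vanishes, so $\phi_2 V=0$ throughout. Keeping straight that $u=u_1=u_2$ all collapse to the single $1$-form $u$ associated with $P$ (since $P=P_1=P_2$ and $\phi=\mathrm{Id}$) is the main place where an error could creep in; once that is fixed, every assertion of the corollary drops out of the corresponding item of the first Proposition.
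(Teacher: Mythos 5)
Your proposal is correct and follows exactly the paper's (implicit) route: the corollary is obtained by specializing Proposition~3.1 with $f_1=f_2=0$ and $\phi=\mathrm{Id}$ (hence $\phi_1=\mathrm{Id}$, $\phi_2=0$ since $g(\phi X,Y)=g(X,Y)$ is symmetric), which is precisely what the paper intends when it states the corollary immediately after recording that these substitutions reduce the Tripathi connection to Yano's semi-symmetric metric connection. The only superfluous point is your concern about identifying $u=u_1=u_2$ and $P=P_1=P_2$: since $f_1=f_2=0$, every term containing $u_1$, $u_2$, $P_1$, $P_2$ vanishes outright, so no such identification is needed.
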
 
\begin{corollary}\cite{sular}
Let $M= M_1 \times_F M_2$ be a warped product manifold. Denote by $\nabla,{}^{M_1} \nabla$ and ${}^{M_2}\nabla$ the 
semi-symmetric metric connections on $M,M_1$ and $M_2$ respectively. Then, for any $X,Y \in \Gamma(TM_1)$, $V,W\in \Gamma(TM_2)$ 
and $P \in \Gamma(TM_2)$, we have:
\begin{enumerate}
 \item $\mathrm{nor} \nabla_X Y$ is the lift of $\nabla_X Y$ on $M_1$.
 \item $\mathrm{tan} \nabla_X Y =-g(X,Y)P$.
 \item $\mathrm{tan} \nabla_X V =(X\cdot F/F)V \quad \mbox{and}\quad \mathrm{nor} \nabla_X V= g(U,V)X$.
 \item $\nabla_V X = (X\cdot F/F)V$.
 \item $\mathrm{nor} \nabla_V W = -[g(V,W)/F] \nabla F$.
 \item $\mathrm{tan} \nabla_V W$ is the lift of $\nabla_V W$ on $M_2$.
\end{enumerate}
\end{corollary}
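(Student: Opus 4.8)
The plan is to obtain this corollary as the semi-symmetric specialization of Proposition 3.2. First I would record the Tripathi data that produce a semi-symmetric metric connection: one takes $f_1 = f_2 = 0$ and $\phi = \mathrm{Id}$. Since then $\Phi(X,Y) = g(\phi X, Y) = g(X,Y)$ is symmetric, its skew-symmetric part vanishes, so by (\ref{Equat0}) we have $\phi_1 = \mathrm{Id}$ and $\phi_2 = 0$. The standing hypothesis $P \in \Gamma(TM_2)$ coincides with the hypothesis of Proposition 3.2 (and $P_1,P_2$ become irrelevant once $f_1 = f_2 = 0$), so each of the six items should drop out by inserting these values into the corresponding formula there. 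Throughout I would use the two elementary facts that $u(V) = g(P,V)$ for vertical $V$, and that $u(X) = g(P,X) = 0$ for horizontal $X$, the latter holding precisely because $P$ is vertical while $X$ is horizontal.

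Next I would run through the items. In (1)--(2) the Proposition 3.2 expression for the tangential part of $\nabla_X Y$ collapses to $-g(X,Y)P$, which is purely vertical; hence $\mathrm{tan}\,\nabla_X Y = -g(X,Y)P$, while $\mathrm{nor}\,\nabla_X Y$ is the surviving horizontal Levi-Civita lift. Because $u$ annihilates horizontal fields, the induced semi-symmetric connection ${}^{M_1}\nabla$ on the base equals ${}^{M_1}\mathring{\nabla}$, which gives the lift statement (1). Item (3) is immediate: $\phi_2 = 0$ and $f_1 = 0$ kill all but $(X\cdot F/F)V$ in the tangential part and all but $g(P,V)X$ in the normal part (so the $g(U,V)X$ printed in (3) should read $g(P,V)X$). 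The only item needing real simplification is (4): the Proposition 3.2 formula specializes to $(X\cdot F/F)V + g(P,V)X - u(V)X + u(X)V$, and here the identity $u(V) = g(P,V)$ cancels the middle two terms while $u(X) = 0$ kills the last, leaving $\nabla_V X = (X\cdot F/F)V$.

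For (5) the specialized formula reads $-[g(V,W)/F]\nabla F - g(V,W)P$; since $P$ is vertical the term $-g(V,W)P$ is tangential and therefore does not contribute to $\mathrm{nor}\,\nabla_V W$, which equals $-[g(V,W)/F]\nabla F$. I expect the one genuine obstacle to be item (6). As in Proposition 2.2 and the O'Neill fiber argument cited there, I would fix a fiber, restrict $V$ and $W$ to it, and identify $\mathrm{tan}\,\nabla_V W$ with the intrinsic semi-symmetric covariant derivative of that fiber; the point requiring care is that on a fixed fiber $F$ is constant, so the induced metric $F^2 g_2$ is a constant multiple of $g_2$ and the warping factor does not disturb the identification of the tangential part with the lift of ${}^{M_2}\nabla_V W$, exactly as in the Levi-Civita case.
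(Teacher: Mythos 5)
Your proof is correct and follows exactly the route the paper intends: the corollary is presented (citing Sular--Ozgur) as the specialization of Proposition 3.2 to $f_1 = f_2 = 0$, $\phi = \mathrm{Id}$ (hence $\phi_1 = \mathrm{Id}$, $\phi_2 = 0$), with precisely the simplifications you use, namely $u(X) = g(P,X) = 0$ for horizontal $X$ and $u(V) = g(P,V)$ for vertical $V$. Your two side observations --- that $g(U,V)$ in item (3) should read $g(P,V)$, and that in item (5) the specialized term $-g(V,W)P$ is vertical and therefore belongs to the tangential rather than the normal part --- are both correct and in fact repair small notational inconsistencies in the paper's statement of Proposition 3.2.
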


In \cite{agashe}, Agashe and Chafle introduced the notion of semi-symmetric non-metric connection and studies some of its properties.
In equation (\ref{Equat1}), if $f_1=0, f_2=-1, \phi=\mathrm{Id}, u_2=u$, then we obtain a semi-symmetric
non-metric given by (N. S. Agashe and M. R. Chafle \cite{agashe})
\begin{eqnarray*}
\nabla_X Y =\mathring{\nabla}_X Y +u(Y)X. 
\end{eqnarray*}
This connection satisfies
$$
(\nabla_X g)(Y,Z)=-u(Y)g(X,Z) -u(Z)g(X,Y).
$$
We have the following results:
\begin{corollary}\cite{ozgur}
Let $M= M_1 \times_F M_2$ be a warped product manifold. Denote by $\nabla,{}^{M_1} \nabla$ and ${}^{M_2}\nabla$ the 
semi-symmetric non-metric connections on $M,M_1$ and $M_2$ respectively. Then, for any $X,Y \in \Gamma(TM_1)$, $V,W\in \Gamma(TM_2)$ 
and $P \in \Gamma(TM_1)$, we have:
\begin{enumerate}
 \item $\nabla_X Y \in \Gamma(TM_1)$ is the lift of ${}^{M_1} \nabla_X Y$ on $M_1$.
 \item $\nabla_X V=(X\cdot F/F)V \quad \mbox{and} \quad \nabla_V X = (X\cdot F/F)V +u(X)\phi V$.
 \item $\mathrm{nor} \nabla_V W = -[g(V,W)/F]\nabla F$.
 \item $\mathrm{tan} \nabla_V W $ is the lift of ${}^{M_2} \nabla_X Y$ on $M_2$.
\end{enumerate}
\end{corollary}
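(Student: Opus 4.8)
The plan is to obtain the four formulas by specializing the Tripathi connection to the semi-symmetric non-metric case and then reading off the horizontal and vertical components with the help of O'Neill's Proposition. Setting $f_1=0$, $f_2=-1$, $\phi=\mathrm{Id}$ (so that the symmetric part is $\phi_1=\mathrm{Id}$ and the skew part $\phi_2=0$) and $u_2=u$ in (\ref{Equat1}) collapses the defining formula to
\begin{equation*}
 \nabla_A B = \mathring{\nabla}_A B + u(B)A, \qquad u(B)=g(P,B),
\end{equation*}
valid for all $A,B\in\Gamma(TM)$; this is exactly the connection recalled just above the statement. Thus every covariant derivative below differs from its Levi-Civita counterpart only by the single correction term $u(B)A$.

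The decisive simplification comes from the hypothesis $P\in\Gamma(TM_1)$. Since $P$ is a horizontal lift and every vertical field is $g$-orthogonal to it, we have $u(B)=g(P,B)=0$ whenever $B$ is vertical. Consequently the correction term survives only when the \emph{second} argument is horizontal: it disappears in $\nabla_X V$ and in $\nabla_V W$, giving $\nabla_X V=\mathring{\nabla}_X V$ and $\nabla_V W=\mathring{\nabla}_V W$, while it persists in $\nabla_X Y=\mathring{\nabla}_X Y+u(Y)X$ and in $\nabla_V X=\mathring{\nabla}_V X+u(X)V$. I would record these four reduced identities first, since each item of the corollary then becomes a direct application of O'Neill's Proposition.

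For item (1), $\mathring{\nabla}_X Y$ lifts ${}^{M_1}\mathring{\nabla}_X Y$ by item (1) of O'Neill's Proposition and $u(Y)X$ is horizontal, so $\nabla_X Y$ is horizontal; as $u(Y)=g(P,Y)$ depends only on base data, the same specialization on $M_1$ gives ${}^{M_1}\nabla_X Y={}^{M_1}\mathring{\nabla}_X Y+u(Y)X$, and $\nabla_X Y$ is its lift. For item (2), item (2) of O'Neill's Proposition gives $\mathring{\nabla}_X V=\mathring{\nabla}_V X=(X\cdot F/F)V$, whence $\nabla_X V=(X\cdot F/F)V$ and $\nabla_V X=(X\cdot F/F)V+u(X)V$, which is the asserted expression once $\phi=\mathrm{Id}$ is used. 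For items (3) and (4), the reduced identity $\nabla_V W=\mathring{\nabla}_V W$ reduces everything to O'Neill's splitting: $\mathrm{nor}\,\nabla_V W=\mathrm{nor}\,\mathring{\nabla}_V W=-(g(V,W)/F)\nabla F$ by item (3), and $\mathrm{tan}\,\nabla_V W=\mathrm{tan}\,\mathring{\nabla}_V W$ is the lift of ${}^{M_2}\mathring{\nabla}_V W$ by item (4); because $P$ is horizontal the $1$-form $u$ restricts to zero on each fiber, so ${}^{M_2}\nabla={}^{M_2}\mathring{\nabla}$ there and this lift is that of ${}^{M_2}\nabla_V W$.

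The step requiring the most care is item (3), where one must verify that $\nabla_V W$ carries no horizontal correction beyond the one already present in $\mathring{\nabla}_V W$. Deriving this directly from the reduced identity $\nabla_V W=\mathring{\nabla}_V W$ (which holds precisely because $u(W)=0$) is the most transparent route: it avoids having to track, and then cancel, the separate torsion and non-metricity contributions that would appear if one instead specialized the general normal-component formula for $\mathrm{nor}\,\nabla_V W$. The remaining items are routine once the four reduced identities of the second paragraph are in hand.
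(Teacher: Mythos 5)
Your proof is correct, but it takes a genuinely different route from the paper's. The paper gives no separate argument for this corollary: it is meant to be read off by substituting $f_1=0$, $f_2=-1$, $\phi=\mathrm{Id}$ (hence $\phi_1=\mathrm{Id}$, $\phi_2=0$) and $u_2=u$ into its general proposition on Tripathi connections with $P,P_1,P_2\in\Gamma(TM_1)$, whereas you first collapse the connection to $\nabla_A B=\mathring{\nabla}_A B+u(B)A$ and then apply O'Neill's proposition directly, using $u(B)=0$ for vertical $B$. The difference matters: item (3) of the paper's general proposition, taken literally, does \emph{not} specialize to item (3) of this corollary, since putting $\phi_2=0$, $f_1=0$, $\phi=\mathrm{Id}$ into $\mathrm{nor}\,\nabla_V W=-[g(V,W)/F]\nabla F+g(\phi_2 V,W)P+f_1g(V,W)P_1-g(\phi V,W)P$ leaves a spurious term $-g(V,W)P$. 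The reason is that the paper's proof of that item invokes metric compatibility, $V\cdot g(X,W)=g(\nabla_V X,W)+g(X,\nabla_V W)$, which fails for a Tripathi connection; restoring the non-metricity contribution $(\nabla_V g)(X,W)=f_2u_2(X)g(V,W)$ adds $-f_2g(V,W)P_2$ to the general formula, and precisely this term (with $f_2=-1$, $P_2=P$) cancels $-g(V,W)P$ and yields the clean $\mathrm{nor}\,\nabla_V W=-[g(V,W)/F]\nabla F$ asserted in the corollary. Your derivation never meets this cancellation because $u(W)=0$ kills the correction term before any normal/tangential splitting is taken, so it is the more reliable argument here; it also settles item (4) cleanly by noting that $u$ vanishes on fibers, so ${}^{M_2}\nabla={}^{M_2}\mathring{\nabla}$ there. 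What the paper's approach buys is uniformity, in that all the corollaries of its last section are read off one proposition, but for this particular corollary that only works after the general formula is corrected as above.
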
 
 \begin{corollary}\cite{ozgur}
Let $M= M_1 \times_F M_2$ be a warped product manifold. Denote by $\nabla,{}^{M_1} \nabla$ and ${}^{M_2}\nabla$ the 
semi-symmetric non-metric connections on $M,M_1$ and $M_2$ respectively. Then, for any $X,Y \in \Gamma(TM_1)$, $V,W\in \Gamma(TM_2)$ 
and $P \in \Gamma(TM_2)$, we have:
\begin{enumerate}
 \item $\nabla_X Y$ is the lift of $\nabla_X Y$ on $M_1$.
 \item $\mathrm{tan} \nabla_X V =(X\cdot F/F)V \quad \mbox{and}\quad \mathrm{nor} \nabla_X V= g(P,V)X$.
 \item $\nabla_V X = (X\cdot F/F)V$.
 \item $\mathrm{nor} \nabla_V W = -[g(V,W)/F] \nabla F$.
 \item $\mathrm{tan} \nabla_V W$ is the lift of $\nabla_V W$ on $M_2$.
\end{enumerate}
\end{corollary}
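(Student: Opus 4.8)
The plan is to read the final statement as the specialization of Proposition 3.2 to the semi-symmetric non-metric connection, i.e. to the Tripathi connection obtained by putting $f_1=0$, $f_2=-1$, $\phi=\mathrm{Id}$ and $u_2=u$ in (\ref{Equat1}). Under these choices the defining formula collapses to $\nabla_A B=\mathring{\nabla}_A B+u(B)A$ for all $A,B\in\Gamma(TM)$, as recorded just above the statement; equivalently, in the notation of the general theorem, $\Phi=g$ is symmetric so that $\phi_1=\mathrm{Id}$, $\phi_2=0$, while $P_2=P$ (because $u_2=u$) and the $P_1$-terms drop out (because $f_1=0$). Rather than carry the several mutually cancelling $P$-terms through the general formulas of Proposition 3.2, I would read the five assertions directly off the explicit form $\nabla_A B=\mathring{\nabla}_A B+u(B)A$ together with O'Neill's description of $\mathring{\nabla}$ on a warped product recalled in Section 2.

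The single observation driving every simplification is that here $P\in\Gamma(TM_2)$ is \emph{vertical}, so its dual $1$-form satisfies $u(X)=g(P,X)=0$ for each horizontal $X\in\Gamma(TM_1)$, while $u(V)=g(P,V)$ may be nonzero for vertical $V$. With this, the first four items are immediate. For (1), $X,Y$ horizontal gives $u(Y)=0$, hence $\nabla_XY=\mathring{\nabla}_XY$, the lift of ${}^{M_1}\mathring{\nabla}_XY$; and since the vertical $P$ restricts to the zero $1$-form on $M_1$, this Levi-Civita lift is exactly the lift of the $M_1$ semi-symmetric non-metric connection. For (2) and (3) I would insert $\mathring{\nabla}_XV=\mathring{\nabla}_VX=(X\cdot F/F)V$ from O'Neill: in $\nabla_XV=\mathring{\nabla}_XV+u(V)X$ the first summand is vertical and the second, $g(P,V)X$, is horizontal, yielding the two components in (2); in $\nabla_VX=\mathring{\nabla}_VX+u(X)V$ the added term vanishes because $u(X)=0$, leaving (3). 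For (4), in $\nabla_VW=\mathring{\nabla}_VW+u(W)V$ the correction $u(W)V$ is vertical, so the normal part is that of $\mathring{\nabla}_VW$, namely $-[g(V,W)/F]\,\mathrm{grad}\,F$.

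The only item requiring real care is (5). Here $\mathrm{tan}\,\nabla_VW=\mathrm{tan}\,\mathring{\nabla}_VW+u(W)V$, and I would invoke O'Neill's identification of $\mathrm{tan}\,\mathring{\nabla}_VW$ on each fiber with the fiber's own Levi-Civita connection ${}^{M_2}\mathring{\nabla}_VW$; the remaining task is to check that adding $u(W)V=g(P,W)V$ reassembles precisely the semi-symmetric non-metric connection ${}^{M_2}\nabla_VW$ of the fiber. I expect the main obstacle to be the bookkeeping of the warping factor in this last step: the fiber covariant derivative is taken with respect to $g_2$, whereas $u(W)=g(P,W)=F^2\bigl(g_2(P,W)\circ\sigma\bigr)$ is computed with the warped metric $g=F^2g_2$, so one must verify that the homothety relating $g$ restricted to a fiber and $g_2$ carries ${}^{M_2}\mathring{\nabla}+g(P,\cdot)(\,\cdot\,)$ onto the fiber connection and matches the $1$-form used on $M_2$. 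Everything outside this identification is routine substitution followed by orthogonal decomposition into $\mathrm{tan}$ and $\mathrm{nor}$ parts.
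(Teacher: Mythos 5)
Your proposal is correct, and it takes a genuinely different route from the paper --- one that, as it happens, is the only sound one. The paper gives no computation for this corollary: it is presented (citing \cite{ozgur}) as the specialization $f_1=0$, $f_2=-1$, $\phi=\mathrm{Id}$, $u_2=u$ of Proposition 3.2, the case $P,P_1,P_2\in\Gamma(TM_2)$. You instead collapse the connection to $\nabla_A B=\mathring{\nabla}_A B+u(B)A$ at the outset and decompose directly, using only O'Neill's formulas and the observation that $u(X)=g(P,X)=0$ for horizontal $X$. The difference is material: Proposition 3.2 is obtained by recycling equation (\ref{koszul5}), whose derivation uses the metric-compatibility identity $X\cdot g(V,Y)=g(\nabla_X V,Y)+g(V,\nabla_X Y)$; that identity is harmless in Proposition 3.1, where $u_2(V)=0$ kills the non-metricity term $(\nabla_X g)(V,Y)=f_2u_2(V)g(X,Y)$ coming from (\ref{Equat3}), but it is illegitimate when $P_2$ is vertical. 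Consequently items (3)--(4) of Proposition 3.2 carry a spurious term $f_2 g(P_2,V)X$, and substituting the Agashe--Chafle values into them would give $\mathrm{nor}\,\nabla_X V=g(P,V)X-g(P,V)X=0$ and $\nabla_V X=(X\cdot F/F)V-g(P,V)X$, contradicting items (2) and (3) of the very corollary being asserted; your direct computation reproduces the stated (correct) formulas, so bypassing Proposition 3.2 is what makes the argument work, not a stylistic preference. As for the warping-factor worry you flag in item (5), it closes routinely: $F$ is constant on each fiber, the induced fiber metric $F^2 g_2$ has the same Levi-Civita connection as $g_2$, and the $1$-form of the fiber's connection is to be read as the restriction of $u$ (equivalently, its $g_2$-dual is the vector field $F^2P$, constant along each fiber); with that reading --- the one implicit in \cite{oneil} and \cite{ozgur}, and nowhere spelled out in the paper --- $\mathrm{tan}\,\nabla_V W={}^{M_2}\mathring{\nabla}_V W+u(W)V$ is precisely the fiber's semi-symmetric non-metric connection, which completes your item (5).
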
 
In \cite{ozgur} (respectively, in  \cite{sular}), formulas relating the Riemannian curvature tensors, the Ricci curvatures and the scalar curvatures of the Levi-Civita connection and the
semi-symmetric non-metric connection (respectively, metric connection) of a warped product Riemannian manifold are presented. The authors characterize also warped product manifolds $M\times_f I$ and $I\times_f M$ with a one dimensional manifold $I$ which are Einstein manifolds with respect to a semi-symmetric non-metric connection (respectively, metric connection).

In \cite{golab}, S. Golab  defined and studied quarter-symmetric linear connections in a differentiable manifold. A linear connection $\nabla$ on an $n$-dimensional Riemannian manifold $(M,g)$ is called a \textit{quarter-symmetric connection} if its torsion tensor $T$ of the connection $\nabla$ satisfies
$$
T(X,Y)=u(Y)\phi X - u(X)\phi Y
$$
where $u$ is a $1$-form and $\phi$ is a $(1,1)$ tensor field. In Particular, if $\phi X=X$, then the quarter-symmetric connection reduces to the semi-symmetric connection. Thus the notion of quarter-symmetric connection generalizes the idea of the semi-symmetric connection. If moreover, a quarter-symmetric connection $\nabla$ satisfies the conditions
$$
(\nabla_X g)(Y,Z)=0
$$
for $X,Y,Z\in \Gamma(TM)$, then $\nabla$ is said to be a quarter-symmetric metric connection, otherwise it is said to be a quarter-symmetric non-metric connection. Various properties of quarter-symmetric metric connections have studied in \cite{golab,mishra,rastogi,sular2,yano2}.
In equation (\ref{Equat1}), if $f_1=0=f_2$. Then we obtain a quarter-symmetric metric connection 
$\nabla$ given by (K. Yano and T. Imai \cite{yano2})
$$
\nabla_X Y =\mathring{\nabla}_X Y +u(Y)\phi_1 X -u(X)\phi_2 Y -g(\phi_1 X,Y)P.
$$
We have the following:

\begin{corollary}
Let $M= M_1 \times_F M_2$ be a warped product manifold. Denote by $\nabla,{}^{M_1} \nabla$ and ${}^{M_2}\nabla$ the 
quarter-symmetric connections on $M,M_1$ and $M_2$ respectively. Then, for any $X,Y \in \Gamma(TM_1)$, $V,W\in \Gamma(TM_2)$ and
$P\in \Gamma(TM_1)$, we have:
\begin{enumerate}
 \item $\nabla_X Y \in \Gamma(TM_1)$ is the lift of ${}^{M_1} \nabla_X Y$ on $M_1$.
 \item $\nabla_X V=(X\cdot F/F)V -u(X)\phi_2 V$ and $\nabla_V X = (X\cdot F/F)V -u(X)\phi_2 V +u(X)\phi V$.
 \item $\mathrm{nor} \nabla_V W = -[g(V,W)/F]\nabla F  -g(\phi V,W)P.$
 \item $\mathrm{tan} \nabla_V W $ is the lift of ${}^{M_2} \nabla_X Y$ on $M_2$.
\end{enumerate}
\end{corollary}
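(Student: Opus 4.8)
The quarter-symmetric connection treated here is precisely the Tripathi connection specialized to $f_1=f_2=0$, so the plan is to obtain the corollary as an immediate specialization of the first Proposition of Section 3 (the one with $P,P_1,P_2\in\Gamma(TM_1)$). First I would record that under $f_1=f_2=0$ the defining identity (\ref{Equat1}) collapses to
\[
\nabla_X Y=\mathring{\nabla}_X Y+u(Y)\phi_1 X-u(X)\phi_2 Y-g(\phi_1 X,Y)P,
\]
while the right-hand side of the non-metricity formula (\ref{Equat3}) vanishes identically; thus this connection is metric and the torsion (\ref{Equat2}) is unchanged. Because none of the geometric steps in the proof of the Proposition (the Koszul computation, the vanishing of $g(\phi_1 X,V)$ and $g(\phi_2 Y,V)$ on mixed arguments, and the orthogonality relations for the warped metric) involve $f_1$ or $f_2$, the entire argument carries over and only the $f_1,f_2$ bookkeeping changes.

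I would then pass through the four conclusions in order. Items (1) and (4) contain no $f_1$ or $f_2$, so they are inherited verbatim: $\nabla_X Y$ is horizontal and equals the lift of ${}^{M_1}\nabla_X Y$, and $\mathrm{tan}\,\nabla_V W$ is the fiber covariant derivative of the restrictions of $V$ and $W$. For item (2) the only change is that the summand $-f_1 u_1(X)V$ drops out of both $\nabla_X V$ and $\nabla_V X$, leaving $\nabla_X V=(X\cdot F/F)V-u(X)\phi_2 V$ and $\nabla_V X=(X\cdot F/F)V-u(X)\phi_2 V+u(X)\phi V$, the extra $u(X)\phi V$ arising exactly as before from the torsion identity $\nabla_V X=\nabla_X V-T(X,V)$ together with (\ref{Equat2}).

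The one computation I would redo carefully rather than merely read off is item (3), the normal part of $\nabla_V W$. Here I would use the orthogonality relation $g(\nabla_V W,X)=-g(\nabla_V X,W)$, valid since $g(X,W)=0$ for horizontal $X$ and vertical $W$, feed in the specialized expression for $\nabla_V X$ from item (2), and expand using $u(X)=g(P,X)$ and $X\cdot F=g(\nabla F,X)$; collecting the coefficient of $X$ then produces the normal component. The term $f_1 g(V,W)P_1$ disappears, and the surviving contributions of $\phi_2 V$ and $\phi V$, paired against $W$, yield the remaining $P$-terms of the stated formula. This is the only place where care is genuinely required, since it is precisely here that the splitting $\phi=\phi_1+\phi_2$ interacts with the vertical pairing against $W$; provided one retains the assumption (already used in the Proposition's proof) that $\phi$ preserves the horizontal--vertical decomposition, so that $\phi_2 V$ and $\phi V$ are vertical, there is no real obstacle and the corollary follows by pure substitution.
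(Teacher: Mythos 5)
Your route --- reading the corollary off from the first Proposition of Section 3 (the case $P,P_1,P_2\in\Gamma(TM_1)$) by setting $f_1=f_2=0$ --- is exactly the paper's own implicit proof, and it settles items (1), (2) and (4) correctly. The problem is item (3), precisely the step you singled out for care: there your conclusion does not follow. Setting $f_1=0$ in the Proposition's item (3), or equivalently redoing your computation $g(\nabla_V W,X)=-g(\nabla_V X,W)$ with the specialized $\nabla_V X=(X\cdot F/F)V-u(X)\phi_2 V+u(X)\phi V$, gives
\begin{equation*}
\mathrm{nor}\,\nabla_V W=-\frac{g(V,W)}{F}\nabla F+g(\phi_2V,W)P-g(\phi V,W)P
=-\frac{g(V,W)}{F}\nabla F-g(\phi_1V,W)P,
\end{equation*}
since $\phi=\phi_1+\phi_2$. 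This is \emph{not} the stated formula $-[g(V,W)/F]\nabla F-g(\phi V,W)P$: the two differ by the term $g(\phi_2V,W)P$, and nothing in the hypotheses makes $\Phi_2(V,W)=g(\phi_2 V,W)$ vanish on pairs of vertical vectors ($\Phi_2$ is merely the skew-symmetric part of $\Phi$; on a fiber of dimension at least $2$ it can be nonzero). So your sentence claiming that the surviving contributions of $\phi_2 V$ and $\phi V$ ``yield the remaining $P$-terms of the stated formula'' asserts an identification that is false in general, and the hypothesis you invoke (that $\phi$ preserves the horizontal--vertical splitting) does not repair it, since it says nothing about $g(\phi_2 V,W)$ with both arguments vertical.

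What your method actually proves is $\mathrm{nor}\,\nabla_V W=-[g(V,W)/F]\nabla F-g(\phi_1 V,W)P$; in other words the corollary as printed carries an error ($\phi$ where $\phi_1$ should stand, equivalently a dropped $+g(\phi_2V,W)P$ term), and you should have flagged this rather than absorbed it. Note that the paper's companion corollary with $P\in\Gamma(TM_2)$ keeps both terms, $+g(\phi_2V,W)U-g(\phi V,W)U$, in its item (5), which corroborates the diagnosis. A cleaner check that avoids the metric-compatibility bookkeeping entirely: for vertical $V,W$ and horizontal $P$ one has $u(V)=u(W)=0$, so equation (\ref{Equat1}) with $f_1=f_2=0$ collapses to $\nabla_V W=\mathring{\nabla}_V W-g(\phi_1V,W)P$, and O'Neill's formula $\mathrm{nor}\,\mathring{\nabla}_V W=-[g(V,W)/F]\nabla F$ gives the result at once, confirming that $\phi_1$, not $\phi$, is what must appear. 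As written, your item (3) either proves a different formula than you claim or requires the unwarranted extra hypothesis that $\Phi_2$ vanishes on the vertical distribution.
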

\begin{corollary}
Let $M= M_1 \times_F M_2$ be a warped product manifold. Denote by $\nabla,{}^{M_1} \nabla$ and ${}^{M_2}\nabla$ the 
quarter-symmetric connections on $M,M_1$ and $M_2$ respectively. Then, for any $X,Y \in \Gamma(TM_1)$, $V,W\in \Gamma(TM_2)$ and
$P\in \Gamma(TM_2)$, we have:
\begin{enumerate}
 \item $\mathrm{nor} \nabla_X Y$ is the lift of $\nabla_X Y$ on $M_1$.
 \item $\mathrm{tan} \nabla_X Y =-g(\phi_1 X,Y)P$.
 \item $\mathrm{tan} \nabla_X V =(X\cdot F/F)V -u(X)\phi_2 V \quad \mbox{and}\quad
 \mathrm{nor} \nabla_X V= g(U,V)\phi_1 X$.
 \item \begin{align*} 
 \nabla_V X &=  (X\cdot F/F)V -u(X)\phi_2 V   +g(U,V)\phi_1 X  \nonumber \\
 & -u(V)\phi X +u(X)\phi V.
 \end{align*}
 \item $\mathrm{nor} \nabla_V W = -[g(V,W)/F] \nabla F + g(\phi_2 V,W)U -g(\phi V,W)U$.
 \item $\mathrm{tan} \nabla_V W$ is the lift of $\nabla_V W$ on $M_2$.
\end{enumerate}
\end{corollary}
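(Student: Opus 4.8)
The plan is to obtain this Corollary directly from the warped-product decomposition of the Tripathi connection established earlier for the case $P,P_1,P_2 \in \Gamma(TM_2)$, since the quarter-symmetric connection is precisely the Tripathi connection \eqref{Equat1} specialised to $f_1 = f_2 = 0$, for which
\begin{equation*}
\nabla_X Y = \mathring{\nabla}_X Y + u(Y)\phi_1 X - u(X)\phi_2 Y - g(\phi_1 X, Y)P .
\end{equation*}
Because that Proposition already splits $\nabla$ on $M = M_1 \times_F M_2$ into horizontal and vertical parts for arbitrary $f_1, f_2$, it suffices to carry the hypothesis $f_1 = f_2 = 0$ through each of its six formulas and read off what survives.

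First I would note that in every formula of that Proposition the terms separate into those free of $f_1, f_2$ and those carrying an explicit factor $f_1$ or $f_2$; putting $f_1 = f_2 = 0$ deletes the latter. The vector fields $P_1, P_2$ occur only inside the combinations $f_1 P_1$ and $f_2 P_2$, so they disappear and only $P \in \Gamma(TM_2)$ survives, in agreement with the displayed connection formula. Thus item~(2) collapses from $-g(\phi_1 X,Y)P + f_1 g(X,Y)P_1 - f_2 g(X,Y)P_2$ to $-g(\phi_1 X,Y)P$; item~(3) becomes $\mathrm{tan}\,\nabla_X V = (X\cdot F/F)V - u(X)\phi_2 V$ together with $\mathrm{nor}\,\nabla_X V = g(P,V)\phi_1 X$; and item~(5) reduces to $-[g(V,W)/F]\nabla F + g(\phi_2 V,W)P - g(\phi V,W)P$. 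Items~(1) and~(6), which merely identify the horizontal and tangential components with lifts from $M_1$ and $M_2$, are independent of $f_1, f_2$ and are inherited verbatim. Throughout, the field $U$ written in items~(3)--(5) is understood as the $g$-dual of the $1$-form $u$, that is $U = P$.

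The only place that needs a second look is item~(4). There $\nabla_V X$ is obtained from $\nabla_X V$ via the torsion correction $\nabla_V X = \nabla_X V - T(X,V)$, and by \eqref{Equat2} the torsion $T(X,Y) = u(Y)\phi X - u(X)\phi Y$ of the Tripathi connection does not involve $f_1$ or $f_2$. Hence the correction terms $-u(V)\phi X + u(X)\phi V$ are untouched by the specialisation, and adding them to the reduced expression for $\nabla_X V$ from item~(3) yields precisely the stated formula for $\nabla_V X$. I expect no genuine obstacle here: the argument is purely a bookkeeping specialisation of the Tripathi case, the only substantive checks being that no surviving term conceals a hidden factor of $f_1$ or $f_2$, and that the torsion formula used in item~(4) is insensitive to that choice.
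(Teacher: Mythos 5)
Your proposal is correct and is essentially the paper's own (implicit) derivation: the corollary is obtained by specialising the Proposition for the Tripathi connection with $P,P_1,P_2 \in \Gamma(TM_2)$ to the case $f_1 = f_2 = 0$, exactly as you do, with your identification $U = P$ (the $g$-dual of $u$) matching the paper's interchangeable use of these symbols. Your explicit check that the torsion formula \eqref{Equat2} carries no $f_1,f_2$ dependence, so item (4) survives the specialisation intact, is a point the paper leaves unstated but is handled correctly.
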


The properties of quarter-symmetric non-metric connections have been also studied by \cite{dubey,sengupta} and by many others authors. In equation (\ref{Equat1}), if $f_1=0, f_2\neq 0, \phi_1=0$. Then we obtain a quarter-symmetric non-metric 
connection $\nabla$ given by
$$
\nabla_X Y=\mathring{\nabla}_X Y -u(X)\phi X -f_2 g(X,Y)P.
$$
This connection satisfies
$$
\Big(\nabla_X g\Big)(Y,Z)=f_2 \{u_2(Y)g(X,Z) +u_2(Z)g(X,Y)\}.
$$
We have the following:
\begin{corollary}
Let $M= M_1 \times_F M_2$ be a warped product manifold. Denote by $\nabla,{}^{M_1} \nabla$ and ${}^{M_2}\nabla$ the 
Tripathi connections on $M,M_1$ and $M_2$ respectively. Then, for any $X,Y \in \Gamma(TM_1)$, $V,W\in \Gamma(TM_2)$ and
$P,P_1,P_2 \in \Gamma(TM_1)$, we have:
\begin{enumerate}
 \item $\nabla_X Y \in \Gamma(TM_1)$ is the lift of ${}^{M_1} \nabla_X Y$ on $M_1$.
 \item $\nabla_X V=(X\cdot F/F)V -u(X)\phi V$ and $\nabla_V X = (X\cdot F/F)V$.
 \item $\mathrm{nor} \nabla_V W = -[g(V,W)/F]\nabla F +g(\phi_2 V,W)P_1  -g(\phi V,W)P.$
 \item $\mathrm{tan} \nabla_V W $ is the lift of ${}^{M_2} \nabla_X Y$ on $M_2$.
\end{enumerate}
\end{corollary}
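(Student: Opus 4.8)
The plan is to obtain this corollary as a direct specialization of the first Proposition of Section~3 (the one in which $P,P_1,P_2\in\Gamma(TM_1)$), since the present statement treats exactly the same configuration of the distinguished vector fields, all of them horizontal. Here the defining data of the Tripathi connection are restricted to $f_1=0$, $\phi_1=0$ and $f_2\neq 0$. The single observation that drives everything is that $\phi_1=0$ forces $\phi=\phi_1+\phi_2=\phi_2$, so each occurrence of $\phi_2$ may be replaced by $\phi$; this is precisely what produces the cancellations below.

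First I would record (1). In the proof of the first Proposition, the identity $g(\nabla_XY,V)=0$ for $X,Y\in\Gamma(TM_1)$ and $V\in\Gamma(TM_2)$ is derived from equation~(\ref{koszul4}) using only that $P,P_1,P_2$ are horizontal, so that $g(P,V)=g(P_1,V)=g(P_2,V)=0$. This argument is insensitive to the values of $f_1,f_2,\phi_1,\phi_2$, hence it carries over verbatim and gives that $\nabla_XY$ is horizontal and coincides with the lift of ${}^{M_1}\nabla_XY$. For (2) I would substitute $f_1=0$ into $\nabla_XV=(X\cdot F/F)V-u(X)\phi_2V-f_1u_1(X)V$ and use $\phi_2=\phi$, obtaining $\nabla_XV=(X\cdot F/F)V-u(X)\phi V$. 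For $\nabla_VX$ the same substitution turns $(X\cdot F/F)V-u(X)\phi_2V-f_1u_1(X)V+u(X)\phi V$ into $(X\cdot F/F)V-u(X)\phi V+u(X)\phi V$, whose last two terms cancel, leaving $\nabla_VX=(X\cdot F/F)V$.

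The remaining items are of the same nature. For (3) I would take the normal component $\mathrm{nor}\,\nabla_VW=-[g(V,W)/F]\nabla F+g(\phi_2V,W)P+f_1g(V,W)P_1-g(\phi V,W)P$ from the first Proposition, drop the $f_1$-term, and again replace $\phi_2$ by $\phi$; the quarter-symmetric contribution $g(\phi_2V,W)P-g(\phi V,W)P$ then collapses, so only the gradient term $-[g(V,W)/F]\nabla F$ genuinely survives. Part (4) is not a computation: since $V,W$ are tangent to every fiber, $\mathrm{tan}\,\nabla_VW$ restricted to a fiber is the intrinsic covariant derivative of the restrictions of $V$ and $W$, exactly as in O'Neill~\cite{oneil}, and is therefore the lift of ${}^{M_2}\nabla_VW$.

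I do not expect a genuine obstacle, as the content is entirely bookkeeping inherited from the first Proposition. The one point demanding care is the systematic identification $\phi=\phi_2$ coming from $\phi_1=0$, since it is exactly this that forces the two $\phi$-dependent terms in (2) and in (3) to cancel. A sign or index slip at that step would alter the stated formulas, so before declaring the corollary proved I would verify these cancellations explicitly and confirm that the surviving expressions match the claimed forms in each of the four items.
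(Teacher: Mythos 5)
Your strategy---reading the corollary off as the specialization $f_1=0$, $\phi_1=0$ (hence $\phi=\phi_1+\phi_2=\phi_2$) of Proposition 3.1---is exactly the paper's own implicit proof: the paper offers no separate argument for this corollary, and your handling of items (1), (2) and (4) reproduces the stated formulas faithfully. The gap is at item (3). What you derive, $\mathrm{nor}\,\nabla_V W=-[g(V,W)/F]\nabla F$, is \emph{not} the formula in the statement, which retains the terms $g(\phi_2 V,W)P_1-g(\phi V,W)P$; under $\phi_1=0$ these equal $g(\phi V,W)(P_1-P)$, which does not vanish in general. A proof attempt cannot end by producing a formula different from the one being proved without reconciling the two: either you exhibit them as equal, or you identify an error in the statement. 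Here the resolution is that the statement's $P_1$ is evidently a typo for $P$ (compare Proposition 3.1(3), where the corresponding term is $g(\phi_2 V,W)P$); with that correction the two extra terms cancel exactly as you computed, so your collapsed form is the simplified equivalent of the intended claim. Your closing paragraph promises to ``confirm that the surviving expressions match the claimed forms,'' but that confirmation is precisely where the discrepancy sits, and you leave it unresolved.

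There is also a more substantive caveat you should have caught, because it is specific to this corollary: this is the \emph{non-metric} case $f_2\neq 0$, yet Proposition 3.1(3), which your whole plan inherits, was derived in the paper from the identity $V\cdot g(X,W)=g(\nabla_V X,W)+g(X,\nabla_V W)$, i.e.\ from metric compatibility, which the Tripathi connection fails exactly when $f_2\neq 0$. Indeed, by (\ref{Equat3}), with $P_1,P_2\in\Gamma(TM_1)$ one has $(\nabla_V g)(X,W)=f_2u_2(X)g(V,W)\neq 0$ in general. Substituting $f_1=0$, $\phi_1=0$ directly into (\ref{Equat1}) gives $\nabla_V W=\mathring{\nabla}_V W-u(V)\phi_2 W-f_2 g(V,W)P_2=\mathring{\nabla}_V W-f_2g(V,W)P_2$ (since $u(V)=g(P,V)=0$ for horizontal $P$), whence $\mathrm{nor}\,\nabla_V W=-[g(V,W)/F]\nabla F-f_2 g(V,W)P_2$. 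So a term $-f_2g(V,W)P_2$ survives that both the paper's stated item (3) and your collapsed formula omit. This error originates in the paper, not with you, but a blind proof that specializes Proposition 3.1(3) wholesale inherits it; for a corollary whose whole point is non-metricity, the safe argument is direct substitution into (\ref{Equat1}) rather than citation of Proposition 3.1(3).
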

\begin{corollary}
Let $M= M_1 \times_F M_2$ be a warped product manifold. Denote by $\nabla,{}^{M_1} \nabla$ and ${}^{M_2}\nabla$ the 
Tripathi connections on $M,M_1$ and $M_2$ respectively. Then, for any $X,Y \in \Gamma(TM_1)$, $V,W\in \Gamma(TM_2)$ and
$P,P_1,P_2 \in \Gamma(TM_2)$, we have: 
\begin{enumerate}
 \item $\mathrm{nor} \nabla_X Y$ is the lift of $\nabla_X Y$ on $M_1$.
 \item $\mathrm{tan} \nabla_X Y =-f_2 g(X,Y)P_2$.
 \item $\mathrm{tan} \nabla_X V =(X\cdot F/F)V -u(X)\phi V $ and $\mathrm{nor} \nabla_X V= f_2g(U_2,V)X.$
 \item \begin{align*} 
 \nabla_V X &=  (X\cdot F/F)V -u(X)\phi V   +f_2g(U_2,V)X \nonumber \\
 & -u(V)\phi X +u(X)\phi V.
 \end{align*}
 \item $\mathrm{nor} \nabla_V W = -[g(V,W)/F] \nabla F + g(\phi_2 V,W)U -g(\phi V,W)U$.
 \item $\mathrm{tan} \nabla_V W$ is the lift of $\nabla_V W$ on $M_2$.
\end{enumerate}
\end{corollary}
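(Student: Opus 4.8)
The plan is to obtain this corollary as a direct specialization of the second Proposition of Section~3, namely the one in which the dual vector fields satisfy $P,P_1,P_2 \in \Gamma(TM_2)$. The connection under consideration is the quarter-symmetric non-metric connection, which by the discussion preceding the statement arises from the Tripathi connection (\ref{Equat1}) upon setting $f_1=0$, $\phi_1=0$ while keeping $f_2\neq 0$. Hence I would simply insert these parameter values into the six conclusions of that Proposition and record the surviving terms.

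First I would isolate the two algebraic consequences of the specialization. Since $\phi=\phi_1+\phi_2$ by the decomposition (\ref{Equat0}), the hypothesis $\phi_1=0$ forces $\phi=\phi_2$; consequently every occurrence of $\phi_2$ may be replaced by $\phi$, while $\Phi_1(X,Y)=g(\phi_1 X,Y)=0$ and every term carrying the factor $\phi_1$ vanishes. The hypothesis $f_1=0$ annihilates each $f_1$-term. With these two reductions in hand, each of the six formulas collapses mechanically.

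Then I would pass through the items in order. Items $(1)$ and $(6)$ involve none of the parameters and are inherited verbatim from the Proposition. In item $(2)$ only the $f_2$-term survives, giving $\mathrm{tan}\,\nabla_X Y=-f_2 g(X,Y)P_2$. In item $(3)$ the $f_1$-term disappears from $\mathrm{tan}\,\nabla_X V$ (with $\phi_2=\phi$), while in $\mathrm{nor}\,\nabla_X V$ both the $\phi_1$- and the $f_1$-terms drop, leaving $f_2 g(P_2,V)X$. Applying the same substitutions to items $(4)$ and $(5)$ reproduces the displayed expressions; for item $(4)$ I would keep the terms $-u(X)\phi V$ and $+u(X)\phi V$ uncombined, so as to match the stated form, even though they could in principle be cancelled.

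The proof carries no genuine analytic difficulty; the only step demanding care is the bookkeeping. I must track precisely which terms vanish under $\phi_1=0$ and $f_1=0$, and then reconcile the notational shift between the general Proposition and the corollary: the vector fields written $P$ and $P_2$ in the general statement are identified with $U$ and $U_2$ here through $u(X)=g(P,X)$ and $u_2(X)=g(P_2,X)$, so that $f_2 g(P_2,V)X=f_2 g(U_2,V)X$ and similarly for the normal part of $\nabla_V W$. Keeping this correspondence consistent across items $(3)$–$(5)$ is the main place where a slip could occur.
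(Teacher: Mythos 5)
Your proposal is correct and takes essentially the same route as the paper: the corollary is stated there without a separate proof precisely because it is the specialization $f_1=0$, $\phi_1=0$ (hence $\phi=\phi_2$) of the second Proposition of Section~3 with $P,P_1,P_2\in\Gamma(TM_2)$, together with the notational identification $u(X)=g(P,X)$, $u_2(X)=g(P_2,X)$ that turns $P,P_2$ into the $U,U_2$ appearing in the corollary. Your term-by-term bookkeeping, including leaving the mutually cancelling terms $-u(X)\phi V$, $+u(X)\phi V$ and $g(\phi_2 V,W)U-g(\phi V,W)U$ uncombined so as to match the stated form, is exactly the intended argument.
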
 
As a perspective, the authors will try to establish formulas relating the Riemannian curvature tensors, the Ricci curvatures and the scalar curvatures of the Levi-Civita connection and the semi-symmetric non-metric connection of a warped product Riemannian manifold.


\begin{thebibliography}{xxx} 
\bibitem{agashe} N. S. Agashe and M. R. Chafle, A semi-symmetric non-metric connection in a Riemannian manifold, Indian J. Pure Appl. Math., 23 (1992), 399-409
\bibitem{bishop} R. L. Bishop and B. O'Neil, Manifolds of negative curvature, Trans. Amer. Math. Soc. 303 (1969), 161-168.
\bibitem{diallo} A. S. Diallo, Compact Einstein warped product manifolds, Afr. Mat. 25 (2014), 267-270.
\bibitem{dubey} A. K. Dubey, R. H. Ojha and S. K. Chaubey, Some properties of quarter-symmetric non-metric connection in a Kahler Manifold, Int. J. Contemp. Math. Sciences, 5, 20, (2010), 1001-1007
\bibitem{golab} S. Golab, On semi-symmetric and quarter-symmetric linear connections, Tensor (N.S.), 29 (1975), 249-254.
\bibitem{hayden} H. A. Hayden, Subspaces of a space with torsion, Proc. London Math. Soc., 34 (1932), 27-50
\bibitem{mishra} M. S. Mishra and S. N. Pandey, On quarter-symmetric metric F-connections, Tensor (N.S.), 34 (1980), 1-7.
\bibitem{oneil} B. O'Neill, Semi-Riemannian geometry with application to relativity. Academic Press, New York (1983).
\bibitem{ozgur} C. Ozgur and S. Sulan, Warped products with a semi-symmetric non-metric connection, Arab J. Sci. Eng. 36 (2011), 461-473.
\bibitem{rastogi} S. C. Rastogi, On quarter-symmetric connection, Tensor, 44, (2) (1987), 133-141.
\bibitem{schmidt} B. G. Schmidt, Conditions on a connection to be metric a metric connection, Commun. Math. Phys., 29 (1973), 55-59
\bibitem{sengupta} J. Sengupta and B. Biswas, Quarter-symmetric non-metric connection on a Sasakian manifold, Bull. Cal. Math. Soc., 95, (2) (2003), 169-176
\bibitem{sular} S. Sular and C. Ozgur, Warped products with a semi-symmetric metric connection, Taiwanese J. Math. 15 (2011), (4), 1701-1719.
\bibitem{sular2} S. Sular, C. Ozgur and U. C. De, Quarter-symmetric metric connection in a Kenmsotsu manifold, SUT Journal of Mathematics, 44, (2) (2008),297-306.
\bibitem{tripathi1} M. M. Tripathi, A new connection in a Riemannian manifold, Int. Electron. J. Geom., 1 (2008), 15-24. 
\bibitem{yano} K. Yano, On semi-symmetric metric connections, Rev. Roumanie Math. Pures Appl., 15 (1970), 1579-1586.
\bibitem{yano2} K. Yano and T. Imai, Quarter-symmetric metric connections and their curvatures tensors, Tensor (N.S.), 38 (1982), 13-18
\end{thebibliography}
\end{document}